\providecommand{\U}[1]{\protect\rule{.1in}{.1in}}
\newcommand{\sett}[1]{\left\{#1\right\}}
\newcommand{\Real}{\mathbb R}
\newcommand{\Hil}{\mathcal{H}}
\newcommand{\E}{\mathbb{E}}
\newcommand{\ip}[2]{\left< #1, #2  \right>}
\def\U{\mathcal U}
\def\amslatex{$\mathcal{A}\kern-.1667em\lower.5ex\hbox{$M$}\kern-.125em\mathcal{S}$-\LaTeX}
\newcommand{\norm}[1]{\left\Vert#1\right\Vert}
\newcommand{\h}{\mathcal{H}}
\newtheorem{set}{set}[section]
\newcommand{\enifed}{\mathrel{\hbox{$\equiv$\hskip -.90em \lower .47ex \hbox{$\rightharpoondown$}}}}
\newtheorem{theorem}[set]{Theorem}
\theoremstyle{plain}
\newtheorem{corollary}[set]{Corollary}
\newtheorem{definition}[set]{Definition}
\newtheorem{lemma}[set]{Lemma}
\newtheorem{proposition}[set]{Proposition}
\newcommand{\Cplx}{\mathbb C}
\begin{document}
\title[Noncommutative Joinings II]{Noncommutative Joinings II}
\author{Jon P. Bannon}
\thanks{JB is partially supported by a Lancaster University (STEM) Fulbright Scholar Award}
\address{Siena College Department of Mathematics, 515 Loudon Road, Loudonville,
NY\ 12211, USA}
\author{Jan Cameron}
\address{Department of Mathematics and Statistics, Vassar College, Poughkeepsie, NY 12604, USA}
\thanks{JC partially supported by Simons Foundation Collaboration Grant for Mathematicians \#319001.}
\author{Kunal Mukherjee}
\address{Indian Institute of Technology Madras, Chennai 600 036, India}
\thanks{KM partially supported by Vassar College Rogol Distinguished Visitor Program.}
\keywords{von Neumann Algebras, Ergodic Theory, Joinings}

\begin{abstract}
This paper is a continuation of the authors' previous work on noncommutative joinings, and contains a study of relative independence of W$^*$-dynamical systems. We prove that, given any separable locally compact group $G$, an ergodic W$^{*}$--dynamical $G$--system $\mathfrak{M}$ with compact subsystem $\mathfrak{N}$ is disjoint relative to $\mathfrak{N}$ from its maximal compact subsystem $\mathfrak{M}_{K}$ if and only if $\mathfrak{N}\cong\mathfrak{M}_{K}$. This generalizes recent work of Duvenhage, which established the result for $G$ abelian.
\end{abstract}
\maketitle

%%%%%%%%%%%%%%%%%%%%%%%%%%%%%%%%%%%%%%%%%%%%%%%%%%%%%%%%%%%%%%%%%%%%%%%%%%%%%%%%%%%%%%%%%%%%%%%%%%%%%%%%%%%%%%%%%%%%%%%%%%%%%%%%%

\section{Introduction}

This paper is a continuation of \cite{BCM}, in which we studied the basic analytical properties of joinings of W$^{*}$--dynamical systems, and used the theory of joinings to establish noncommutative analogues of a number of fundamental characterizations of ergodicity and mixing properties from classical ergodic theory.

%Also note that the joining theory of subsystems explored below reveals highly noncommutative phenomena even for subsystems of the simplest abelian systems. Indeed, if $p_{1},p_{2},p_{3}$ are equivalent orthogonal projections in $\ell^{\infty}(\{1,2,3\})$ then the basic construction of the inclusion of abelian von Neumann algebras $\{p_{1}+p_{2},p_{3}\}''\subset\ell^{\infty}(\{1,2,3\})$ is already noncommutative (cf. Ex. 4.4.1 of \cite{SiSm}).  

Recall that a W$^{\ast}$-dynamical system is a tuple $\mathfrak{N}=(N,\rho,\alpha, G )$, where $N$ is a von Neumann algebra, $\rho$ is a faithful
normal state on $N$, and $\alpha$ is a strongly continuous action of a separable
locally compact group $G$ on $N$ by $\rho$-preserving automorphisms.  A joining of two W$^*$-dynamical systems expresses a relationship between the two systems that generalizes the situation in which they contain a common subsystem. Recall the following definition, which was motivated by its compatibility with mixing properties observed in the noncommutative setting. 
\begin{definition}
\label{DefinitionJoiningasState}A \textbf{joining} of the $W^{\ast}
$--dynamical systems $\mathfrak{N}=(N,\rho,\alpha, G )$ and $\mathfrak{M}
=(M,\varphi,\beta, G )$ is a state $\omega$ on
the algebraic tensor product $N\odot M^{op}$ satisfying
\begin{align}
\omega(x\otimes1_{M}^{op})  &  =\rho(x),\label{Eq: Pushforwards}\\
\omega(1_{N}\otimes y^{op})  &  =\varphi(y),\nonumber
\end{align}
and%
\begin{align}
\omega\circ(\alpha_{g}\otimes\beta_{g}^{op})  &  =\omega,
\label{Eq: Diagonal Action Preservation}\\
\omega\circ(\sigma_{t}^{\rho}\otimes(\sigma_{t}^{\varphi})^{op})  &
=\omega,\nonumber
\end{align}
for all $x\in N$, $y\in M$, $g\in G $ and $t\in\mathbb{R}$. We denote by
$J_{S}(\mathfrak{N},\mathfrak{M})$ the set of all joinings of
$\mathfrak{N}$ and $\mathfrak{M}$.
\end{definition}

It was shown in \cite{BCM} that a joining of systems $\mathfrak{N} = (N,\rho, \alpha, G)$ and $\mathfrak{M} = (M, \varphi, \beta, G)$ can, alternatively, be viewed as a pointed correspondence $\Hil$ between $N$ and $M$ with some additional structure, and as an equivariant Markov map $\Phi: N \rightarrow M$ (cf. \cite{BCM} Theorem 4.6).  In the present work, we study the special case in which such a Markov map is bimodular with respect to an embedded subalgebra of $N$ arising from a common subsystem $\mathfrak{B}$ of $\mathfrak{M}$ and $\mathfrak{N}$.  These maps are called joinings of $\mathfrak{N}$ and $\mathfrak{M}$ relative to (or, simply, ``over") the subsystem $\mathfrak{B}$ and have been employed widely in the study of classical, measure-preserving systems.   For instance, the celebrated Furstenburg Multiple Recurrence Theorem is a consequence of the Furstenberg-Zimmer Structure Theorem for ergodic measure-preserving dynamical systems, one proof of which \cite{Gl} showcases the joinings of an ergodic system with itself relative to a subsystem.

The question of whether a theory of joinings relative to subsystems can be formulated for non-abelian acting groups dates back at least to \cite{GlWe}, in which two proofs were given that the extension of a zero topological entropy transformation $T:X\rightarrow X$ to the space of probability measures M(X) (with the w$^{*}$-topology) must also have zero topological entropy. The first proof is joining-theoretic and the second combinatorial. The combinatorial proof generalizes to extend this result to amenable acting groups, and the authors suggested that this generalization may also admit a joining-theoretic proof. Though we do not pursue that specific question, in this paper we build a theoretical framework for approaching questions of this type -- our results hold for W*-dynamical systems arising from general locally groups acting on general von Neumann algebras with separable preduals.

A number of technical challenges arise in this setting.  First, as noted in \cite{Du1,Du2,BCM}, the dynamics of ergodic systems with non-abelian acting groups is somewhat more complicated.  For instance, such groups may admit irreducible finite-dimensional (hence, non-weakly mixing) representations, so may give rise to compact systems in which there are no eigenoperators.  Thus, to extend the results of \cite{Du2} to this new setting, different methods are required; these are set out in Section \ref{section:relative ergodicity and mixing properties}, which also draws on the previous work of the authors in \cite{BCM} and \cite{BCM2}.

A second challenge is how to extract information from a canonical projection associated to a joining of two systems. In the classical setting, a joining $\lambda$ of two p.m.p. $G$-systems $\mathcal{X}=(X,\mu, \alpha)$ and $\mathcal{Y}=(Y,\nu,\beta)$ is encoded in an equivariant Markov map, namely the restriction to $L^{2}(X,\mu)$ of the projection $L^{2}(X\times Y,\lambda) \rightarrow L^{2}(Y,\nu)$. This map restricts further to an equivariant map of $L^{\infty}(X,\mu)$ into $L^{\infty}(Y,\nu)$. However, the naive noncommutative analogue of this projection will not, in general, restrict to a map between von Neumann algebras. The precise relationship, articulated in Lemma \ref{ModularSymmetry} below, between this ``classical'' projection with the Markov map associated to a noncommutative joining as in Theorem 4.1 of \cite{BCM}, is essential for our primary description of noncommutative joinings over a common subsystem (Proposition \ref{States vs UCP}) and subsequent characterization (Theorem \ref{Partial Isometry}) of the notion of independence relative to a common subsystem.
The proof depends ultimately on the technical results of \cite{BCM2}, in which it was shown that the canonical Hilbert space operator associated to a Markov map between two systems exhibits a useful symmetry with respect to the modular operators and automorphism groups of the two systems.  

The contents of the paper are organized as follows. Section \ref{section:joinings over a common subsystem} recalls some of the notation and basic structural results of \cite{BCM}, and employs the modular symmetry results from \cite{BCM2} to obtain a useful intertwining lemma for Markov maps $\Phi:(N,\rho) \rightarrow (M,\phi)$ between von Neumann algebras, a consequence of which is Theorem \ref{Projection Formula}, which yields a formula -- valid on the dense subalgebra of analytic elements -- for the above mentioned Hilbert space projection associated to a joining of two systems.  We use these observations to formulate the notion of a joining relative to a common subsystem in the language of correspondences, and Proposition \ref{States vs UCP} connects this point of view to the one developed in \cite{Du2} which emphasizes states.  This yields a  formulation of disjointness of two W$^{*}$-dynamical systems relative to a common subsystem in the language of Markov maps and corresponedences.  

Section \ref{section:relative independence and orthogonality}  lays out some technical tools needed for our main results in the following section.  Of central importance is the fact that a ucp map $\Phi: N \rightarrow M$ is a joining of W$^*$ systems $\mathfrak{N}=(N,\rho,\alpha,G)$ and $\mathfrak{M}=(M,\varphi,\beta,G)$ relative to a subsystem $\mathfrak{B}$ if and only if its adjoint ucp map $\Phi^*$ is also a joining of the two systems relative to the same system.  This observation leads to a characterization of the relatively independent joining in terms of a ``projection property" of the associated ucp map in Theorem \ref{Partial Isometry}; and a characterization of the relatively independent joining in terms of the associated orthogonal projection in Theorem \ref{relative independence characterization}.

We turn in  Section \ref{section:relative ergodicity and mixing properties} to the dynamics of ergodic systems, and our main results of the paper.  We introduce the Kronecker subsystem of an ergodic W$^*$-dynamical system $\mathfrak{N}=(N,\rho,\alpha,G)$, whose underlying von Neumann algebra is generated by the finite-dimensional subspaces of $N$ which are invariant under the action $\alpha$.  Theorem \ref{theorem:Kronecker} establishes that these subspaces do in fact generate a compact subsystem $\mathfrak{N}_K$ of $\mathfrak{N}$, whose underlying von Neumann algebra is tracial and injective, and which turns out to be maximal among compact subsystems of $\mathfrak{N}.$  The notion of a maximal compact subsystem of a W$^*$-dynamical system was studied in \cite{Du2}, under the assumption that the acting group is abelian, but the methods used there required an abelian acting group, so a different approach is necessary here. A central role is played by the main results of \cite{BCM}, which control the finite-dimensional invariant subspaces of the unitary representation associated to an ergodic group action.  Our main result in Section \ref{section:relative ergodicity and mixing properties} extends Theorem 5.6 of \cite{Du2} from the setting of abelian groups to that of a general locally compact acting group, and combines techniques from \cite{BCM} and the technical methods introduced in the preceding sections to give a characterization of the Kronecker subsystem of an ergodic system in terms of relative disjointness.

\section{Joinings over a common subsystem}
\label{section:joinings over a common subsystem}
In this section, we develop the notion of a joining of two noncommutative dynamical systems over a common subsystem, from the point of view of correspondences.  An essential ingredient will be the main technical result of \cite{BCM2}, in which it was observed that the ucp map associated to a joining exhibits a useful symmetry with respect to the modular automorphism groups of the two underlying dynamical systems.

\subsection{Modular Symmetry of Joinings and Associated Projections}
If $N$ and $M$ are von Neumann algebras with normal, faithful states $\rho:N \rightarrow \mathbb{C}$ and $\varphi:M \rightarrow \mathbb{C}$, a linear map $\Phi: N \rightarrow M$ is called a $(\rho,\varphi)$--Markov map if $\Phi$ is unital, completely positive, and satisfies 
\[\varphi \circ \Phi = \rho, \quad \text{ and } \quad \sigma_t^\varphi \circ \Phi = \Phi \circ \sigma_t^\rho, \text{ for all }  t \in \mathbb{R}.\]If $\omega$ is a joining of $W^{\ast}
$--dynamical systems $\mathfrak{N}$ and $\mathfrak{M}$ as above, let $\mathcal{H}_{\omega}$ denote its associated GNS Hilbert space, i.e. the separation and completion of $N\odot
M^{op}$ with respect to the inner product extending
$\langle x_{0}\otimes y_{0}^{op},x_{1}\otimes y_{1}^{op}\rangle_{\omega}:=\omega
(x_{1}^{\ast}x_{0}\otimes(y_{0}y_{1}^{\ast})^{op})$ for all $x_{0},x_{1}\in N$
and $y_{0},y_{1}\in M$. The GNS representation $\pi_{\omega}$ of $N\odot
M^{op}$ will be viewed as defining a bimodule structure by $x\eta y:=\pi_{\omega}(x\otimes y^{op})\eta$ for all $x\in N$, $y\in M$ and $\eta\in\mathcal{H}_{\omega}$. We denote by $\xi_{\omega}$ the canonical cyclic vector associated to the class of $1_{N} \otimes 1_{M^{op}}$ in $\mathcal{H}_{\omega}$.   Recall from Theorem 3.3 of \cite{BCM} that, if we denote by $\Omega_\rho$ the canonical cyclic and separating vector for $L^2(N,\rho)$, then the map $  x \Omega_\rho  \mapsto  x \xi_\omega $ extends to an operator $L=L_{\Omega_{\rho},\xi_{\omega}}: L^2(N,\rho) \rightarrow \h_\omega$ called the \emph{left exchange} map that is a unitary left $N$--module isomorphism from $L^2(N,\rho)$ onto $\overline{N\xi_{\omega}}$.  Similarly, the \emph{right exchange} map $R = R_{\Omega_{\varphi},\xi_\omega}: L^2(M,\varphi) \rightarrow \h_\omega$ extends $\Omega_\varphi y \mapsto \xi_\omega y$ to a unitary right $M$--module isomorphism of $L^{2}(M,\varphi)$ onto $\overline{\xi_{\omega}M}$. In \cite{BCM} it was also shown that \[  \Phi_\omega(x) = R^\ast \pi_N(x) R\] defines a normal $(\rho,\varphi)$--Markov map $\Phi_\omega: N \rightarrow M$. Let $T_{\Phi_{\omega}}$ be the extension of the map $x\Omega_{\rho}\mapsto \Phi_{\omega}(x)\Omega_{\varphi}$ to $L^{2}(N,\rho)$, and denote by $P_{\omega}$  the map which is the restriction to the subspace $\overline{N\xi_{\omega}}$ of the orthogonal projection of $\mathcal{H}_{\omega}$ onto $\overline{\xi_{\omega}M}$. This is a natural analogue of the (classically) Markov projection mentioned in the introduction. The left and right exchange maps witness a relationship between $T_{\Phi_\omega}$ and $P_\omega$, as follows:

\begin{lemma}\label{MainTheorem}
Given a joining $\omega$ of the systems $\mathfrak{N}$ and $\mathfrak{M}$, we have
\begin{align}\label{Eq: Main Relation}
R_{\Omega_{\varphi},\xi_\omega}T_{\Phi_{\omega}}=P_{\omega}L_{\Omega_{\rho},\xi_{\omega}}.
\end{align}

\end{lemma}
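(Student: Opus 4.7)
The plan is to verify the identity on the dense subspace $N\Omega_\rho \subset L^2(N,\rho)$, which suffices since both sides are bounded operators. The workhorse observation, which I would extract at the outset, is that setting $y = 1_M$ in the defining relation $\Omega_\varphi y \mapsto \xi_\omega y$ for $R = R_{\Omega_\varphi,\xi_\omega}$ yields $R\Omega_\varphi = \xi_\omega$.

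For $x \in N$, the right-hand side $P_\omega L_{\Omega_\rho,\xi_\omega}(x\Omega_\rho)$ is the orthogonal projection of $L_{\Omega_\rho,\xi_\omega}(x\Omega_\rho) = x\xi_\omega \in \overline{N\xi_\omega}$ onto $\overline{\xi_\omega M}$. For the left-hand side, I would unfold $T_{\Phi_\omega}$ and $\Phi_\omega$ in succession:
\[ R_{\Omega_\varphi,\xi_\omega} T_{\Phi_\omega}(x\Omega_\rho) = R\Phi_\omega(x)\Omega_\varphi = RR^*\pi_N(x) R\Omega_\varphi = RR^*(x\xi_\omega), \]
where the second equality uses the formula $\Phi_\omega(x) = R^*\pi_N(x) R$ from \cite{BCM}, and the last uses $R\Omega_\varphi = \xi_\omega$ together with the fact that $\pi_N(x)$ acts on $\xi_\omega$ via the left $N$-module structure to produce $x\xi_\omega$.

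The identity then reduces to the identification $RR^* = P_{\overline{\xi_\omega M}}$ on $\mathcal{H}_\omega$. This is immediate: since $R$ is an isometry from $L^2(M,\varphi)$ onto the closed subspace $\overline{\xi_\omega M}$, we have $R^*R = I$ and $RR^*$ is the self-adjoint idempotent onto the range of $R$. Because $x\xi_\omega \in \overline{N\xi_\omega}$, the restriction $P_\omega$ agrees with $P_{\overline{\xi_\omega M}}$ at this vector, yielding $RR^*(x\xi_\omega) = P_\omega(x\xi_\omega) = P_\omega L_{\Omega_\rho,\xi_\omega}(x\Omega_\rho)$.

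I do not anticipate a genuine obstacle here, as the lemma is a formal consequence of how $\Phi_\omega$ was constructed from $R$ in \cite{BCM}. The only care required is bookkeeping around the status of $R$ as an everywhere-defined isometry, so that $RR^*$ really is the projection $P_{\overline{\xi_\omega M}}$, and the reconciliation of the various meanings of the notation $x\xi_\omega$. The deeper modular symmetry results of \cite{BCM2} referenced in the introduction are not needed for Lemma \ref{MainTheorem} itself; they will enter in the subsequent applications of this intertwining relation.
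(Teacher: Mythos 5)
Your proof is correct and takes essentially the same route as the paper's: both rest on the formula $\Phi_\omega = R^*\pi_N(\cdot)R$, the identity $R\Omega_\varphi = \xi_\omega$, and the fact that $R$ is a unitary onto $\overline{\xi_\omega M}$ (so that $RR^*$ is the orthogonal projection onto that subspace), followed by a density/boundedness argument. The paper merely phrases the verification as an inner-product computation against vectors $\Omega_\varphi y$, arriving first at $T_{\Phi_\omega} = R^*P_\omega L$, but the content is identical to your direct vector calculation.
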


\begin{proof}  For any $x \in N$ and $y \in M,$ one has

\begin{align}
\langle T_{\Phi_{\omega}}(x\Omega_\rho), \Omega_{\varphi} y \rangle_{\varphi}&=\langle R^{\ast}\pi_{N}(x)R\Omega_{\varphi}, \Omega_{\varphi} y \rangle_{\varphi} \\\nonumber
&=\langle x\xi_{\omega}, \xi_{\omega} y\rangle_{\omega}\\ \nonumber
&=\langle P_{\omega} x\xi_{\omega}, \xi_{\omega} y \rangle_{\omega}\\\nonumber
&=\langle R^{\ast} P_{\omega}L (x\Omega_{\rho}), \Omega_{\varphi} y\rangle_{\varphi}.
\end{align}
By the boundedness of the exchange maps, and of $T_{\Phi_\omega}$ and $P_\omega$, it follows that $T_{\Phi_\omega}$ and $R^* P_\omega L$ coincide on $L^2(N, \rho)$.  This proves the result.  \end{proof}

It is clear that $P_{\omega}$ is the unique operator $T$ from $\overline{N\xi_{\omega}}$ into $\overline{\xi_{\omega}M}$ that for all $\xi \in \overline{N\xi_{\omega}}$ and $\eta \in \overline{\xi_{\omega}M}$ satisfies \begin{align}\label{Eq: Characterize P}
\langle T \xi,\eta \rangle_{\omega}=\langle \xi,\eta \rangle_{\omega}.
\end{align}

Equation \eqref{Eq: Main Relation} expresses a relationship between $P_\omega$  and $\Phi_\omega$ at the Hilbert space level, but not necessarily at the level of the dense subspaces $N \Omega_\rho$ and $N \xi_\omega.$   Investigating these maps at the algebraic level, one might hope to show that $P_\omega$ is the extension of the map defined for $x\in N$ by $T(x\xi_{\omega})=\xi_{\omega}\Phi_{\omega}(x)$.  It is not clear that this map satisfies \eqref{Eq: Characterize P} for all $\xi \in \overline{N\xi_{\omega}}$ and $\eta \in \overline{\xi_{\omega}M}$, since it is, a priori, unbounded.   However, we will see that modular theory allows us to determine a formula for $P_\omega$ in terms of $\Phi_\omega$ on a dense subalgebra of analytic elements.

Let $\mathfrak{N}$ and $\mathfrak{M}$ be systems as above, and $\Phi \in J_{m}(\mathfrak{N},\mathfrak{M})$ in the following. Consider the set
\vspace{2mm}
\[N_\rho = \{x \in N: \text{ there is a $w^{*}$--entire function } F:\mathbb{C} \rightarrow N \text{ such that } F(it) = \sigma_t^\rho(x)\}.\]

\vspace{2mm}
It is a standard fact, following immediately for example from (16) on page 29 together with (1) on page 32 of \cite{St}, that $N_\rho$ is a $w^*$-dense $\ast$-subalgebra of $N$; it is often called the algebra of \emph{entire analytic} elements of $N$.   If $x \in N_\rho,$ then in fact the function $F_{x}:\Cplx \rightarrow N$ extending the map $it \mapsto \sigma_t^\rho(x)$ is unique, and for $z\in \mathbb{C}$ we define $\sigma_{-iz}^\rho(x)$ to be $F_{x}(z)$.  Denote by $\mathcal{D}_\rho$ the subspace $N_\rho \Omega_\rho \subset L^2(N,\rho).$ Define $M_\varphi \subseteq M$ and $\mathcal{D}_\varphi$ analogously.

The following lemma shows how a $(\rho,\varphi)$--Markov map $\Phi$ connects the respective entire analytic elements of the states $\rho$ and $\varphi,$ and the analytic continuations of the associated maps arising from their respective modular automorphism groups.  We will make use of the resulting symmetry repeatedly in the sequel.

\begin{lemma} \label{ModularSymmetry}Let $\mathfrak{N}=(N,\rho)$ and $\mathfrak{M}=(M,\varphi)$ be von Neumann algebras with faithful normal states, and $\Phi$ a $(\rho,\varphi)$--Markov map.  Denote by $N_\rho \subseteq N$ and $M_\varphi \subseteq M$ the respective subalgebras of entire analytic elements.  Then $\Phi(N_\rho) \subseteq M_\varphi$ and for any $x \in N_\rho$ and $z \in \Cplx,$ we have that 
\[ \Phi \circ \sigma_z^\rho(x) = \sigma_z^\varphi \circ \Phi(x).\]

\end{lemma}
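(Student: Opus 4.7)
The plan is to prove both claims simultaneously by analytic continuation. Fix $x \in N_\rho$ and let $F_x : \mathbb{C} \to N$ denote the unique $w^*$-entire function with $F_x(it) = \sigma_t^\rho(x)$; equivalently, under the convention $\sigma_{-iz}^\rho(x) := F_x(z)$, we have $F_x(iz) = \sigma_z^\rho(x)$ for every $z \in \mathbb{C}$. Define $G_x : \mathbb{C} \to M$ by $G_x(z) := \Phi(F_x(z))$. If I can verify that $G_x$ is $w^*$-entire and agrees on the imaginary axis with $t \mapsto \sigma_t^\varphi(\Phi(x))$, then by uniqueness of the $w^*$-entire continuation it will follow that $\Phi(x) \in M_\varphi$ with $F_{\Phi(x)} = G_x$, and then
\[
\sigma_z^\varphi(\Phi(x)) \;=\; F_{\Phi(x)}(iz) \;=\; G_x(iz) \;=\; \Phi(F_x(iz)) \;=\; \Phi(\sigma_z^\rho(x))
\]
for every $z \in \mathbb{C}$, which is the claimed identity.

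The boundary condition on the imaginary axis is immediate from the defining modular symmetry of a Markov map:
\[
G_x(it) \;=\; \Phi\bigl(\sigma_t^\rho(x)\bigr) \;=\; \sigma_t^\varphi\bigl(\Phi(x)\bigr), \qquad t \in \mathbb{R}.
\]
The crux is verifying that $G_x$ itself is $w^*$-entire. For each $\psi \in M_*$, I would write $\psi \circ G_x = (\psi \circ \Phi) \circ F_x$. Since $\Phi$ is (in the setting of this paper, where it arises as $\Phi_\omega$ for a joining $\omega$ as in Lemma~\ref{MainTheorem}) normal, $\psi \circ \Phi$ belongs to $N_*$, and then the $w^*$-entireness of $F_x$ forces $z \mapsto \psi(G_x(z))$ to be entire. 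Consequently $G_x$ is $w^*$-entire, and the scheme above delivers the lemma.

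The only nontrivial point is the need for $\Phi$ to be normal in order to compose with $w^*$-entire functions without exiting the class of weak-$*$ holomorphic $M$-valued maps; without normality one could not guarantee that $\psi \circ \Phi \in N_*$, and the composition argument would break down. In every application of interest here -- namely the ucp maps $\Phi_\omega$ associated to joinings -- normality is built in, so no extra hypothesis must actually be imposed.
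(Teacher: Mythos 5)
Your proof is correct, but it takes a genuinely different and more elementary route than the paper's. The paper obtains $\Phi(N_\rho)\subseteq M_\varphi$ by invoking Theorem 3.1 of \cite{BCM2} --- the Hilbert-space-level statement that $T_{\Phi}$ maps $\mathcal{D}_\rho=N_\rho\Omega_\rho$ into $\mathcal{D}_\varphi$, which the authors flag as requiring a long technical modular-theory argument --- and then passes back to the algebra via the fact that $\Omega_\varphi$ is separating for $M$. You instead work entirely at the algebra level: $G_x:=\Phi\circ F_x$ is $w^*$-entire because pre-composition with a normal map carries $M_*$ into $N_*$, it agrees with $t\mapsto\sigma_t^\varphi(\Phi(x))$ on $i\mathbb{R}$ by the Markov intertwining property, and uniqueness of $w^*$-entire extensions finishes the argument. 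This is self-contained and avoids the $L^2$ machinery altogether; what it does not recover is the full strength of the cited result of \cite{BCM2} about analytic vectors, but that is not needed for this lemma. The one point you hedge on --- normality of $\Phi$ --- is in fact not an additional hypothesis: any positive unital map with $\varphi\circ\Phi=\rho$ and $\varphi,\rho$ faithful and normal is automatically normal (if $x_\alpha\nearrow x$, then $\Phi(x_\alpha)\nearrow y\le\Phi(x)$ and $\varphi(\Phi(x)-y)=\rho(x)-\sup_\alpha\rho(x_\alpha)=0$, so faithfulness of $\varphi$ gives $y=\Phi(x)$). Adding that one line makes your proof cover every $(\rho,\varphi)$--Markov map as the lemma requires, not only the maps $\Phi_\omega$ arising from joinings.
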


\begin{proof}
It follows from Theorem 3.1 of \cite{BCM2}\footnote{Theorem 3.1 of \cite{BCM2}, although a straightforward observation for unitary $T_{\Phi}$, in the general case requires a long and technical modular theory argument.} that $T_{\Phi_\omega}(\mathcal{D}_\rho) \subseteq \mathcal{D}_\varphi.$ This, together with the fact that $\Omega_{\varphi}$ is separating for $M$, implies that $\Phi_\omega(N_\rho) \subseteq M_\varphi$, and the final equality follows from uniqueness of the entire extension.
\end{proof}
 
 Note that Lemma \ref{ModularSymmetry} implies that for any $x \in N_\rho,$ we have $\sigma_{i/2}^\varphi(\Phi_\omega(x))^{*}=\sigma_{-i/2}^\varphi(\Phi_\omega(x^*))\in M$, and 

\[\Omega_{\varphi}\sigma^{\varphi}_{i/2}(\Phi_{\omega}(x))=J_{\varphi}\sigma_{-i/2}^\varphi(\Phi_\omega(x^*))J_{\varphi} \Omega_\varphi=J_{\varphi}\Delta^{1/2}_\varphi \Phi_\omega(x^*)\Delta^{-1/2}_\varphi J_{\varphi} \Omega_\varphi=\Phi_\omega(x)\Omega_\varphi.\]

From this observation, we obtain the following result of independent interest, which will also be useful in the sequel.

\begin{theorem} \label{Projection Formula} If $\mathfrak{N}=(N,\rho,\alpha,G)$ and $\mathfrak{M}=(M,\varphi,\beta,G)$ are systems, and $\omega \in J_S(\mathfrak{N},\mathfrak{M}),$  then for any $x \in N_\rho$ we have 
\[  P_\omega(x \xi_\omega) = \xi_\omega \sigma_{i/2}^\varphi(\Phi_\omega(x)).\]
\end{theorem}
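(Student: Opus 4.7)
The plan is to assemble the identity as a short chain of equalities, combining the intertwining relation from Lemma \ref{MainTheorem}, the defining actions of the exchange maps $L$ and $R$, and the analytic observation displayed just before the theorem. The hypothesis $x \in N_\rho$ enters in two essential ways: it makes $x\Omega_\rho$ a vector on which $T_{\Phi_\omega}$ admits the algebraic description $x\Omega_\rho \mapsto \Phi_\omega(x)\Omega_\varphi$, and via Lemma \ref{ModularSymmetry} it ensures that $\Phi_\omega(x) \in M_\varphi$ so that $\sigma_{i/2}^\varphi(\Phi_\omega(x))$ is an honest bounded element of $M$, making the right-hand side of the claimed formula meaningful.

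First I would note that $L(x\Omega_\rho) = x\xi_\omega$ by the definition of the left exchange map, and $T_{\Phi_\omega}(x\Omega_\rho) = \Phi_\omega(x)\Omega_\varphi$ by the definition of $T_{\Phi_\omega}$. Applying Lemma \ref{MainTheorem} to the vector $x\Omega_\rho \in L^2(N,\rho)$ therefore gives
\[
P_\omega(x\xi_\omega) \;=\; P_\omega L(x\Omega_\rho) \;=\; R\,T_{\Phi_\omega}(x\Omega_\rho) \;=\; R\bigl(\Phi_\omega(x)\Omega_\varphi\bigr).
\]
Next I would use the observation displayed just before the statement of the theorem: since $x \in N_\rho$, Lemma \ref{ModularSymmetry} yields $\Phi_\omega(x) \in M_\varphi$, and the computation with $J_\varphi \Delta_\varphi^{1/2}$ shows that $\Phi_\omega(x)\Omega_\varphi = \Omega_\varphi\,\sigma^\varphi_{i/2}(\Phi_\omega(x))$ with $\sigma^\varphi_{i/2}(\Phi_\omega(x)) \in M$. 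Feeding this into the display above and invoking the defining property $R(\Omega_\varphi y) = \xi_\omega y$ of the right exchange map for $y = \sigma^\varphi_{i/2}(\Phi_\omega(x))$ gives
\[
P_\omega(x\xi_\omega) \;=\; R\bigl(\Omega_\varphi\,\sigma^\varphi_{i/2}(\Phi_\omega(x))\bigr) \;=\; \xi_\omega\,\sigma^\varphi_{i/2}(\Phi_\omega(x)),
\]
which is the claimed identity.

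The principal obstacle is conceptual rather than computational. The naive guess $P_\omega(x\xi_\omega) = \xi_\omega \Phi_\omega(x)$ cannot be verified directly because the densely defined map $x\xi_\omega \mapsto \xi_\omega\Phi_\omega(x)$ is a priori unbounded and need not satisfy the characterization \eqref{Eq: Characterize P} of the projection. The correct expression involves the modular twist $\sigma^\varphi_{i/2}$, and the bookkeeping required to make this twist make sense is exactly what forces the restriction to $x \in N_\rho$; all of this is already packaged in Lemma \ref{ModularSymmetry} and the displayed computation, so once that is in hand the proof amounts to a two-line chase through Lemma \ref{MainTheorem}.
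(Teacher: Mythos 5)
Your proof is correct and follows essentially the same route as the paper: apply the relation $R\,T_{\Phi_\omega}=P_\omega L$ from Lemma \ref{MainTheorem} to $x\Omega_\rho$, use the displayed identity $\Phi_\omega(x)\Omega_\varphi=\Omega_\varphi\,\sigma_{i/2}^\varphi(\Phi_\omega(x))$ guaranteed by Lemma \ref{ModularSymmetry}, and finish with the defining action of the right exchange map. No gaps.
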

\begin{proof}Letting both sides of \eqref{Eq: Main Relation} act on $x\Omega_{\rho}\in \mathcal{D}_\rho$ we obtain
\begin{align*}
P_{\omega}(x\xi_{\omega})&=R_{\Omega_{\varphi},\xi_\omega}T_{\Phi_{\omega}}(x\Omega_{\rho})\\
&= R_{\Omega_{\varphi},\xi_\omega} \Phi_{\omega}(x)\Omega_{\varphi}\\
&=  R_{\Omega_{\varphi},\xi_\omega} \Omega_\varphi \sigma_{i/2}^\varphi(\Phi_\omega(x))\\
&=\xi_\omega \sigma_{i/2}^\varphi(\Phi_\omega(x)).
\end{align*}
\end{proof}

\subsection{Joinings over a common subsystem and relative independence}

We now consider the notion of joining of two systems over a common subsystem from the point of view of correspondences. The modular symmetry of the associated Markov maps seen above will be essential for obtaining full generalizations of classical results on relative independence of systems over a common subsystem.
For convenience, we begin by recalling a few key concepts from \cite{BCM}.

\begin{definition} \label{definition:common subsystem} Let $\mathfrak{B}=(B,\mu,\gamma,G)$ and $\mathfrak{N}=(N,\rho,\alpha,G)$ be $W^{*}$-- dynamical systems. Given an injective $\ast$-homomorphism $\iota \in J_{m}(\mathfrak{B},\mathfrak{N})$, the pair $(\mathfrak{B}, \iota)$ is called a subsystem of $\mathfrak{N}$. We call such a map $\iota$ an embedding of the system $\mathfrak{B}$ into $\mathfrak{N}$. If $\mathfrak{M}=(M,\varphi,\beta,G)$ is another $W^{*}$-- dynamical system, and there are embeddings $\iota_N \in J_{m}(\mathfrak{B},\mathfrak{N})$ and $\iota_M \in J_{m}(\mathfrak{B}, \mathfrak{M})$ then the triple $(\mathfrak{B}, \iota_{N}, \iota_{M})$ is called a common subsystem of $\mathfrak{N}$ and $\mathfrak{M}$.
\end{definition}

When the embeddings $\iota_N$ and $\iota_M$ in Definition \ref{definition:common subsystem} are clear from the context, in what follows, we will identify a subsystem $\mathfrak{B}$ with its images in $\mathfrak{N}$ and $\mathfrak{M}$, and simply refer to $\mathfrak{B}$ as a common subsystem of $\mathfrak{M}$ and $\mathfrak{N}$.

Note that if $\mathfrak{N} = (N, \rho, \alpha, G)$ is a system, and $B$ is a von Neumann subalgebra of $N$ which is the image of a normal, $\rho$-preserving conditional expectation $\mathbb{E},$ then also $\sigma^\rho_t \circ \mathbb{E} = \mathbb{E} \circ \sigma^\rho_t$ for all $t \in \Real$.  Since such an expectation is necessarily unique, it also commutes with the automorphisms $\alpha_g$ for all $g \in G.$  Therefore, the action $\alpha$ restricts to $B$ and $\mathfrak{B} = (B, \rho \circ \mathbb{E}, \alpha, G)$ defines a subsystem of $\mathfrak{N}$ via the inclusion map of $B$ into $N.$  On the other hand, if $\mathfrak{B}$ is a subsystem of $\mathfrak{N},$ then the associated embedding $\iota:B \rightarrow N$ necessarily satisfies $\iota \circ \sigma_t^\mu = \sigma_t^\rho \circ \iota$ for all $t \in \Real,$ and it follows that there is a unique normal, $\rho$-preserving conditional expectation $\mathbb{E}: N \rightarrow \iota(B)$ (see, for instance, Theorem 5.4 of \cite{BCM}). The uniqueness of $\mathbb{E}$ implies, furthermore, that $\alpha_g \circ \mathbb{E} = \mathbb{E} \circ \alpha_g$ for all $g \in G,$ so that $G$ acts on the von Neumann subalgebra $\iota(B)$ of $N.$  Finally, by definition $\iota$ satisfies $\mu = \rho \circ \iota$, so we can without loss of any dynamical information identify $B$ with its image $\iota(B)$ in $N$.

\begin{comment}
Throughout this section, we will develop basic properties of joinings of systems $\mathfrak{N}$ and $\mathfrak{M}$ that are also bimodule maps with respect to a von Neumann subalgebra arising from a common subsystem of $\mathfrak{N}$ and $\mathfrak{M}$. The notion of the multiplicative domain of a ucp map will be useful in this regard.

\begin{definition} Let $A$ and $B$ be unital C$^*$-algebras and $\Phi: A \rightarrow B$  a unital, completely positive map.  The multiplicative domain of $\Phi$ is the set
\[ A_\Phi = \sett{a \in A: \Phi(a^*a) = \Phi(a)^*\Phi(a),\,\Phi(aa^*)=\Phi(a)\Phi(a)^*} \subseteq A.\]
\end{definition}

\smallskip

The following result of Choi gives a further characterization of the multiplicative domain.  

\begin{lemma}\label{Bimodules Homomorphisms Choi} \cite{Ch} If $A$ and $B$ are unital C$^*$-algebras and $\Phi:A \rightarrow B$ is a ucp map, then $A_\Phi$ is a C$^*$-subalgebra of $A$, the restriction of $\Phi$ to $A_\Phi$ is a $\ast$-homomorphism, and furthermore,
\[A_\Phi = \sett{a \in A: \Phi(ax)=\Phi(a)\Phi(x),\,\Phi(xa) = \Phi(x)\Phi(a) \text{ for all } x \in A}.\]
\end{lemma}
In the current setting, Lemma \ref{Bimodules Homomorphisms Choi} implies that those ucp maps $\Phi:N \rightarrow M$ which are bimodule maps with respect to a given subalgebra $B \subseteq N$ are precisely those whose restriction to $B$ is a $\ast$-homomorphism. 
\end{comment}

The main focus of this paper is on the class of joinings of systems $\mathfrak{N}$ and $\mathfrak{M}$ that ``fix" a common subsystem $\mathfrak{B}$; more specifically, we consider  joinings $\Phi \in J_{m}(\mathfrak{N},\mathfrak{M})$ which intertwine the inclusions $\iota_{N}$ and $ \iota_{M}$ of $B$ into $N$ and $M$. In Proposition \ref{States vs UCP}, we obtain a characterization of such a map $\Phi$ in terms of an associated state on $B \odot B^{op}$.  In the proof, and throughout the remainder of this paper, we will make use of the notion of the adjoint of a ucp Markov map.   In particular, let $(N,\rho)$ and $(M,\varphi)$ be as above, and let $\Phi:N\rightarrow M$ be a normal u.c.p. map. Then, recall  \cite{AC} that $\Phi$ satisfies the conditions
\[ \varphi\circ\Phi=\rho \text{ and }
\Phi\circ\sigma_{t}^{\rho}=\sigma_{t}^{\varphi}\circ\Phi, \quad
t\in\mathbb{R}\]
if and only if there is a normal u.c.p. map $\Phi^{\ast}:M\rightarrow N$ satisfying
\begin{equation}\label{Eq: AccardiCecciniAdjoint}
\rho(\Phi^{\ast}(y)x)=\varphi(y\Phi(x)), \text{ }y\in M\text{ and }x\in N.\\
\end{equation}

%If $\mathfrak{N}$ and $\mathfrak{M}$ are systems as above, and $\Phi:N \rightarrow M$ is a joining which has this property with respect to an embedded subalgebra $\iota(B) \subseteq N$ arising from a subsystem $\mathfrak{B}$ of $\mathfrak{N}$, then the following result implies that $\mathfrak{B}$ is also a subsystem of $\mathfrak{M}$. 

\begin{proposition} \label{States vs UCP}  Let $\mathfrak{N} = (N, \rho, \alpha, G)$ and $\mathfrak{M} = (M, \varphi, \beta, G)$ be systems, and let $\mathfrak{B} = (B, \mu, \gamma, G)$ be a common subsystem of $\mathfrak{N}$ and $\mathfrak{M}$.  Denote by $\iota_N$ and $\iota_{M}$  the respective embeddings of $B$ into $N$ and $M$, and $\psi$ the state on $B\odot B^{op}$ defined by
\[ \psi(b_1 \otimes b_2^{op}) = \ip{b_1 \Omega_\mu b_2}{\Omega_\mu}_{\mu}.\]
Let $\Phi \in J_{m}(\mathfrak{N},\mathfrak{M})$ be a joining.  Then, the following conditions are equivalent:

\begin{itemize}
\item [(i)] The restriction of $\Phi$ to $\iota_N(B)$ is the injective $\ast$-homomorphism $\iota_{N}(b)\mapsto \iota_{M}(b)$.

\item [(ii)] The state $\omega = \omega_\Phi$ satisfies $\omega \circ (\iota_N \otimes \iota_M^{op}) = \psi$ on the $\ast$-algebra $B \odot B^{op}$, where $\iota_{M}^{op}$ is the natural map $J_{\mu}b^{*}J_{\mu} \mapsto J_{\varphi} \iota_{M}(b)^{*} J_{\varphi}$ from $B'\cap \mathbf{B}(L^{2}(B,\mu))\rightarrow M'\cap \mathbf{B}(L^{2}(M,\varphi))$.   
\end{itemize}
\end{proposition}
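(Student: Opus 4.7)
The plan is to derive a closed-form expression for the joining state $\omega_\Phi$ in terms of $\Phi$ and the modular data of $\varphi$ on a dense subalgebra, and then exploit this formula to compare the two sides of (ii) directly.  Starting from $\omega_\Phi(x \otimes y^{op}) = \langle x\xi_\omega,\, \xi_\omega y^*\rangle_\omega$, I insert the orthogonal projection $P_\omega$ (legal because $\xi_\omega y^* \in \overline{\xi_\omega M}$), apply Theorem \ref{Projection Formula}, transport the resulting inner product to $L^{2}(M,\varphi)$ via the right exchange map, and invoke the identity $\Omega_\varphi a = \sigma_{-i/2}^\varphi(a)\Omega_\varphi$ valid for $a \in M_\varphi$.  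This yields the formula
\[ \omega_\Phi(x \otimes y^{op}) = \varphi\bigl(\sigma_{i/2}^\varphi(y)\,\Phi(x)\bigr), \qquad x \in N_\rho,\ y \in M_\varphi. \]
The same derivation with $\Phi$ replaced by the identity of $B$ gives $\psi(b_1 \otimes b_2^{op}) = \mu\bigl(\sigma_{i/2}^\mu(b_2)\,b_1\bigr)$ on analytic elements.  Because $\iota_M$ is itself a $(\mu,\varphi)$-Markov map, Lemma \ref{ModularSymmetry} gives $\sigma_z^\varphi \circ \iota_M = \iota_M \circ \sigma_z^\mu$ and sends analytic elements of $B$ to analytic elements of $M$, and likewise for $\iota_N$.

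For (i) $\Rightarrow$ (ii), once $\Phi\circ \iota_N = \iota_M$, direct substitution into the formula above gives, for analytic $b_1, b_2 \in B$,
\begin{align*}
\omega_\Phi\bigl(\iota_N(b_1)\otimes \iota_M(b_2)^{op}\bigr)
&= \varphi\bigl(\sigma_{i/2}^\varphi(\iota_M(b_2))\,\Phi(\iota_N(b_1))\bigr) \\
&= \varphi\bigl(\iota_M(\sigma_{i/2}^\mu(b_2))\,\iota_M(b_1)\bigr) \\
&= \mu\bigl(\sigma_{i/2}^\mu(b_2)\, b_1\bigr) = \psi(b_1\otimes b_2^{op}).
\end{align*}
Both sides are bounded states, so the equality on the $w^{*}$-dense analytic $\ast$-subalgebra extends to all of $B \odot B^{op}$ by separate $w^{*}$-continuity in each variable.

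The main obstacle is the reverse direction (ii) $\Rightarrow$ (i), where one must upgrade an equality of scalar pairings into an identity of elements in $M$.  My plan is a Kadison--Schwarz argument: for each analytic $b \in B$, I compute
\[ \bigl\| \Phi(\iota_N(b))\Omega_\varphi - \iota_M(b)\Omega_\varphi\bigr\|_\varphi^2 \]
by expanding the squared norm into four contributions.  The Schwarz inequality for the u.c.p.\ map $\Phi$ together with $\varphi \circ \Phi = \rho$ and $\rho\circ \iota_N = \mu$ bound $\varphi(\Phi(\iota_N(b))^* \Phi(\iota_N(b))) \leq \varphi(\Phi(\iota_N(b^*b))) = \mu(b^*b)$, while $\varphi(\iota_M(b)^*\iota_M(b)) = \mu(b^*b)$ directly.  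For the cross term I apply the formula for $\omega_\Phi$ with $y = \iota_M(\sigma_{-i/2}^\mu(b^*)) \in M_\varphi$ and invoke hypothesis (ii):
\[ \varphi\bigl(\iota_M(b)^* \Phi(\iota_N(b))\bigr) = \omega_\Phi\bigl(\iota_N(b)\otimes \iota_M(\sigma_{-i/2}^\mu(b^*))^{op}\bigr) = \psi\bigl(b\otimes \sigma_{-i/2}^\mu(b^*)^{op}\bigr) = \mu(b^*b), \]
and taking complex conjugates furnishes the remaining cross term (which is real).  The four contributions therefore sum to at most zero, forcing $\Phi(\iota_N(b))\Omega_\varphi = \iota_M(b)\Omega_\varphi$; since $\Omega_\varphi$ separates $M$, we conclude $\Phi(\iota_N(b)) = \iota_M(b)$ for every analytic $b \in B$.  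Normality of $\Phi$ and $\iota_M$, together with $w^{*}$-density of the analytic elements in $B$, then extend this identity to all of $B$, so that $\Phi|_{\iota_N(B)}$ is precisely the injective $\ast$-homomorphism $\iota_N(b) \mapsto \iota_M(b)$, establishing (i).
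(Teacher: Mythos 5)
Your proposal is correct, and it reaches both implications by a genuinely different route from the paper. The engine of your argument is the closed scalar formula $\omega_\Phi(x\otimes y^{op})=\varphi\bigl(\sigma_{i/2}^\varphi(y)\,\Phi(x)\bigr)$ on analytic elements, extracted from Theorem \ref{Projection Formula} together with the right exchange map and the identity $\Omega_\varphi a=\sigma_{-i/2}^\varphi(a)\Omega_\varphi$; the paper never isolates this formula. For $(i)\Rightarrow(ii)$ you then just substitute and use $\sigma_z^\varphi\circ\iota_M=\iota_M\circ\sigma_z^\mu$, which collapses the paper's long chain of modular identities (the isometry $W$ computation) to three lines; what you lose is the byproduct that $L^2(B,\mu)$ embeds isometrically into $\Hil_\Phi$ as a $B$--$B$ correspondence, which is the conceptual content the paper is after and which it reuses. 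For $(ii)\Rightarrow(i)$ the paper builds the isometry $V$ and derives the bimodule relation $\iota_N(b)\xi_\omega=\xi_\omega\sigma_{i/2}^\varphi(\iota_M(b))$ before testing $\Phi(\iota_N(b))$ against vectors $\Omega_\varphi y$; you instead run a Kadison--Schwarz saturation argument on $\bigl\|\Phi(\iota_N(b))\Omega_\varphi-\iota_M(b)\Omega_\varphi\bigr\|_\varphi^2$, with the cross term evaluated by applying $(ii)$ at the pair $\bigl(b,\sigma_{-i/2}^\mu(b^*)\bigr)$ --- a legitimate instance since $\sigma_{-i/2}^\mu(b^*)\in B_\mu\subseteq B$. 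Both of your cross-term and Schwarz estimates check out ($\varphi\circ\Phi=\rho$, $\rho\circ\iota_N=\mu$ give the bound $\mu(b^*b)$ for the first term, matching the other three contributions), and the final density step via separate normality is exactly the one the paper itself invokes in Proposition \ref{Adjointissusbsytemjoin}. In short: the paper's proof is structural (sub-correspondences), yours is computational (a scalar formula plus Schwarz saturation); yours is shorter and self-contained given Theorem \ref{Projection Formula}, while the paper's yields the correspondence-theoretic picture that motivates Definition \ref{Joining Over Subsystem}.
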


\begin{proof}  First, assuming that condition $(ii)$ holds, observe that the GNS space $\Hil_\omega$ is a $B-B$ bimodule with left-right action given by $b_1 \cdot \zeta \cdot b_2 := \iota_N(b_1) \zeta \iota_M(b_2)$, for all $\zeta \in \mathcal{H}_{\omega}$.  We will show that $L^2(B,\mu)$ is contained in the pointed correspondence $(\Hil_\omega, \xi_\omega)$ as a $B-B$ bimodule.  To do this, define a map $V: L^2(B,\mu) \rightarrow \Hil_\omega$ by first setting
\[ V(b_1 \Omega_\mu b_2) = \iota_N(b_1) \xi_\omega \iota_M(b_2)\]
for $b_1,b_2 \in B$, and then extending to linear combinations in the obvious way.  Using the assumption that $\omega \circ (\iota_N \otimes \iota_M^{op}) = \psi$ on $B \odot B^{op},$ it is then routine to check that $V$ is isometric and satisfies $V(b_1 \zeta b_2) = \iota_N(b_1) V(\zeta) \iota_M(b_2)$ for all $\zeta \in L^2(B,\mu)$ and any $b_1, b_2 \in B.$  This shows that $L^2(B,\mu) \subseteq \Hil_\omega$ as a $B-B$ correspondence.  

Now, if $b \in B_\mu$ is an entire analytic element with respect to the state $\mu$, then $\iota_M(b) \in M_\varphi$ by Lemma \ref{ModularSymmetry}.  Moreover,
\[ \iota_N(b) \xi_\omega = \iota_N(b)V\Omega_\mu = V(\Omega_\mu \sigma_{i/2}^\mu(b)) = \xi_\omega \iota_M( \sigma_{i/2}^\mu(b)) = \xi_\omega \sigma_{i/2}^\varphi(\iota_M(b)).\]
  Denote by $R$ the right exchange map $L^2(M,\varphi) \rightarrow \Hil_\omega$, and $\pi$ the left action of $N$ on $\Hil_\omega$. Recall from \cite{BCM} that $\Phi$ is precisely the normal ucp map associated to the $N-M$ correspondence $\Hil_\omega$, so has the form $\Phi=\Phi_\omega = R^* \pi(\cdot)R.$  To see that $\Phi$ behaves nicely on $\iota_N(B)$, let $b \in B_\mu$, and $y,z \in M$ be arbitrary.  Then
\begin{align*} \ip{\Phi(\iota_N(b))\Omega_\varphi y}{\Omega_\varphi z}_{\varphi} &= \ip{R^* \pi(\iota_N(b)) R(\Omega_\varphi y)}{\Omega_\varphi z}_{\varphi} \\
                                                                &= \ip{\iota_N(b) \xi_\omega y}{\xi_\omega z}_{\omega}\\
                                                                &= \ip{\xi_\omega \sigma^\varphi_{i/2}(\iota_M(b)) y}{\xi_\omega z}_{\omega}\\
                                                                &= \ip{\Omega_\varphi \sigma^\varphi_{i/2}(\iota_M(b)) y}{\Omega_\varphi z}_{\varphi}\\
                                                                &= \ip{\iota_M(b)\Omega_\varphi y}{\Omega_\varphi z}_{\varphi}.
\end{align*}
Therefore, 
\[\Phi(\iota_N(b)) = \iota_M(b) = \iota_M \circ \iota_N^*(\iota_N(b))\]
for all $b \in B_\mu.$ A standard density argument, using normality of these maps, then shows that this equality holds for all $b \in B,$ so $\Phi$ coincides with the $\ast$-homomorphism $\iota_M \circ \iota_N^*$ on the subalgebra $\iota_N(B) \subseteq N$.  This proves $(i).$

Now suppose that condition $(i)$ holds.  Since $\Phi|_{\iota_N(B)}$ is the injective $\ast$-homomorphism $\iota_{M}\circ\iota_{N}^{*}|_{\iota_N(B)}$, we consider the natural $B$-bimodule structure on the Stinespring Hilbert space $\Hil_\Phi$ given by

\[ b_1 \cdot \zeta \cdot b_2 = \iota_N(b_1) \zeta \, (\Phi\circ \iota_N)(b_2)=\iota_N(b_1) \zeta \, \iota_M(b_2),\]
for $b_1,b_2 \in B$ and $\zeta \in \Hil_\Phi.$  Let $\mu$ be the state $\rho|_B: B \rightarrow \Cplx$, and consider the map $W:L^2(B,\mu) \rightarrow \Hil_\Phi$, defined on a dense subspace of $L^2(B,\mu)$ by setting 

\[ W(b_1 \Omega_\mu b_2) = \iota_N(b_1) \xi_\Phi \iota_M(b_2),\]
for $b_1,b_2 \in B$, and extending to linear combinations.  This map is isometric, since for $b_{1}, b_{2}, b_{3}, b_{4}\in B_{\mu}$, we may employ (5) of \cite{BCM}, Lemma \ref{ModularSymmetry} and the special form of $\Phi$ to obtain
\begin{align*}
\langle W(b_{1}\Omega_{\mu}b_{2}), W(b_{3}\Omega_{\mu}b_{4})  \rangle_{\Phi}&= \langle \iota_{N}(b_{1})\xi_{\Phi}\iota_{M}(b_{2}), \iota_{N}(b_{3})\xi_{\Phi}\iota_{M}(b_{4})\rangle_{\Phi}\\
&=\langle \Phi(\iota_{N}(b_{3}^{*}b_{1}))\Omega_{\varphi}\iota_{M}(b_{2}), \Omega_{\varphi}\iota_{M}(b_{4}) \rangle_{\varphi}\\
&=\langle \Omega_{\varphi}\sigma_{i/2}^{\varphi}(\Phi(\iota_{N}(b_{3}^{*}b_{1})))\iota_{M}(b_{2}), \Omega_{\varphi}\iota_{M}(b_{4}) \rangle_{\varphi}\\
&=\langle J_{\varphi}(\sigma_{i/2}^{\varphi}(\Phi(\iota_{N}(b_{3}^{*}b_{1})))\iota_{M}(b_{2}))^{*}J_{\varphi}\Omega_{\varphi},J_{\varphi}(\iota_{M}(b_{4}))^{*}J_{\varphi}\Omega_{\varphi} \rangle_{\varphi}\\
&=\langle \iota_{M}(b_{4})^{*}\Omega_{\varphi}, \iota_{M}(b_{2})^{*}(\sigma_{i/2}^{\varphi}(\Phi(\iota_{N}(b_{3}^{*}b_{1}))))^{*}\Omega_{\varphi} \rangle_{\varphi}\\
&=\langle \sigma_{i/2}^{\varphi}(\Phi(\iota_{N}(b_{3}^{*}b_{1})))\iota_{M}(b_{2}b_{4}^{*})\Omega_{\varphi}, \Omega_{\varphi}\rangle_{\varphi}\\
&= \varphi(\sigma_{i/2}^{\varphi}(\Phi(\iota_{N}(b_{3}^{*}b_{1})))\iota_{M}(b_{2}b_{4}^{*}))\\
&=\varphi(\sigma_{i/2}^{\varphi}(\iota_{M}(b_{3}^{*}b_{1}))\iota_{M}(b_{2}b_{4}^{*}))\\
&=\varphi(\iota_{M}(\sigma_{i/2}^{\mu}(b_{3}^{*}b_{1}))\iota_{M}(b_{2}b_{4}^{*}))\\
&=\varphi\circ\iota_{M}(\sigma_{i/2}^{\mu}(b_{3}^{*}b_{1})b_{2}b_{4}^{*})\\
&=\mu(\sigma_{i/2}^{\mu}(b_{3}^{*}b_{1})b_{2}b_{4}^{*})\\
&=\langle \sigma_{i/2}^{\mu}(b_{3}^{*}b_{1})b_{2}b_{4}^{*}\Omega_{\mu},\Omega_{\mu}\rangle_{\mu}\\
&=\langle b_{4}^{*}\Omega_{\mu},b_{2}^{*}(\sigma_{i/2}^{\mu}(b_{3}^{*}b_{1}))^{*}J_{\mu}\Omega_{\mu}\rangle_{\mu}\\
&=\langle J_{\mu}(\sigma_{i/2}^{\mu}(b_{3}^{*}b_{1})b_{2})^{*}J_{\mu}\Omega_{\mu}, J_{\mu}b_{4}^{*}J_{\mu}\Omega_{\mu} \rangle_{\mu}\\
&=\langle \Omega_{\mu}\sigma_{i/2}^{\mu}(b_{3}^{*}b_{1})b_{2},\Omega_{\mu}b_{4} \rangle_{\mu}\\
&=\langle b_{3}^{*}b_{1}\Omega_{\mu}b_{2},\Omega_{\mu}b_{4} \rangle_{\mu}\\
&=\langle b_{1}\Omega_{\mu}b_{2},b_{3}\Omega_{\mu}b_{4} \rangle_{\mu}.
\end{align*}

The map $W$ hence extends to an isometric embedding of $L^2(B,\mu)$ into $\Hil_\Phi$, satisfying $W(b_1 \eta b_2) = \iota_N(b_1) W(\eta) \iota_M(b_2)$ for all $b_1,b_2 \in B$ and $\eta \in L^2(B,\mu).$ This shows that $L^2(B,\mu) \subseteq \Hil_\Phi$ as $B$--$B$ correspondences.  To see that $(ii)$ holds, note, as was shown in Theorem 4.6 of \cite{BCM}, that the state $\omega = \omega_\Phi: N \odot M^{op} \rightarrow \Cplx$ given by

\[  \omega(x \otimes y^{op}) = \ip{x \xi_\Phi y}{\xi_\Phi}_{\Phi}.\]
satisfies conditions (\ref{Eq: Pushforwards}) and (\ref{Eq: Diagonal Action Preservation}) of Definition \ref{DefinitionJoiningasState}, so $\omega \in J_s(\mathfrak{N},\mathfrak{M}).$   Moreover, for any $b_1, b_2 \in B$, we have 
\begin{align*} \omega(\iota_N(b_1) \otimes \iota_M(b_2)^{op}) = \ip{\iota_N(b_1) \xi_\Phi \iota_M(b_2)}{\xi_\Phi}_{\Phi} &= \ip{W(b_1 \Omega_\mu b_2)}{W\Omega_\mu}_{\Phi}\\
&=\ip{b_1 \Omega_\mu b_2}{\Omega_\mu}_{\mu} = \psi(b_1 \otimes b_2^{op}). \end{align*}
This completes the proof that $(i)$ implies $(ii).$  \end{proof}
We are now ready to state the main definitions of this section. 

\begin{definition} \label{Joining Over Subsystem} Let $\mathfrak{N}=(N,\rho,\alpha,G)$ and $\mathfrak{M}=(M,\varphi,\beta,G)$ be systems, and let $\mathfrak{B}=(B, \mu, \gamma, G)$ be a common subsystem of $\mathfrak{N}$ and $\mathfrak{M}$, with embeddings $\iota_N \in J_m(\mathfrak{B},\mathfrak{N})$ and $\iota_M \in J_m(\mathfrak{B}, \mathfrak{M}).$  We say that $\Phi \in J_m(\mathfrak{N},\mathfrak{M})$  is a joining of $\mathfrak{N}$ and $\mathfrak{M}$ over the common subsystem $\mathfrak{B}$ if the restriction of $\Phi$ to $\iota_N(B)$ is the $\ast$-isomorphism $\iota_{M}\circ \iota_{N}^{-1}$ of $\iota_N(B)$ with $\iota_M(B)$.  Denote by $J_\mathfrak{B}(\mathfrak{N},\mathfrak{M})$ the collection of all joinings of $\mathfrak{N}$ and $\mathfrak{M}$ over the common subsystem $\mathfrak{B}$.
\end{definition}

A few remarks are in order.  First, although the definition of a joining of systems $\mathfrak{N}$ and $\mathfrak{M}$ over a common subsystem $\mathfrak{B}$ depends on the embeddings $\iota_N$ and $\iota_M,$ in this work, there will always be canonical choices of $\iota_N$ and $\iota_M$ and therefore no ambiguity in this definition. In the case of a self-joining $\Phi$ of a system $\mathfrak{N}$ over a single subsystem $(\mathfrak{B}, \iota_{N})$, the above condition simply asserts that $\Phi$ is $B$--bimodular.   

Secondly, note that our definition of subsystem coincides with that of a ``modular subsystem" in \cite{Du}.  Therefore, recalling from \cite{BCM} the one-to-one correspondence between ucp maps $\Phi \in J_m(\mathfrak{N},\mathfrak{M})$ and their associated states $\omega \in J_s(\mathfrak{N},\mathfrak{M}),$ Proposition \ref{States vs UCP} shows that in this setting our defintion of joinings over common subsystems is equivalent to the one found in \cite{Du}.  As we will see in what follows (and, as can also be seen in \cite{Du}), the assumption of modularity of subsystems is crucial to extending any significant classical results about joinings over subsystems to the noncommutative setting.

If $\mathfrak{B}$ is a common subsystem of $\mathfrak{N}$ and $\mathfrak{M}$ as above, then the ucp map $\iota_M \circ \iota_N^*: N \rightarrow M$ satisfies Definition \ref{Joining Over Subsystem}, so in this situation $J_\mathfrak{B}(\mathfrak{N},\mathfrak{M})$ is always nonempty.   The special case in which $\iota_M \circ \iota_N^*$ is the only joining of $\mathfrak{N}$ with $\mathfrak{M}$ over $\mathfrak{B}$ is interesting, so we precise the following definition.

%It follows from the proof of Proposition \ref{States vs UCP} that the restriction of any $\Phi \in J_\mathfrak{B}(\mathfrak{N},\mathfrak{M})$ to the subalgebra $\iota_N(B)$ coincides with that of $\iota_M \circ \iota_N^*$.  

%Note the obvious fact that a system $\mathfrak{M}$ can never be disjoint from itself over a subsystem $(\mathfrak{M},\iota_{M}, \iota_{M})$, due to the presence of the identity joining.

%The special case in which $\iota_M \circ \iota_N^*$ is the only joining of $\mathfrak{N}$ with $\mathfrak{M}$ over $\mathfrak{B}$ is interesting, so we make the following definition.

\begin{definition} \label{Relative Independence} Let $\mathfrak{N}=(N,\rho,\alpha,G)$ and $\mathfrak{M}=(M,\varphi,\beta,G)$ be systems, and let $\mathfrak{B}=(B, \mu, \gamma, G)$ be a common subsystem of $\mathfrak{N}$ and $\mathfrak{M}$, with embeddings $\iota_N \in J_m(\mathfrak{B},\mathfrak{N})$ and $\iota_M \in J_m(\mathfrak{B}, \mathfrak{M}).$ The joining $\iota_M \circ \iota_N^* \in J_\mathfrak{B}(\mathfrak{N},\mathfrak{M})$ is called the relatively independent joining of $\mathfrak{N}$ and $\mathfrak{M}$ over $\mathfrak{B}$.  We say that the systems $\mathfrak{N}$ and $\mathfrak{M}$ are relatively independent over $\mathfrak{B}$ (or disjoint over $\mathfrak{B}$) if $J_\mathfrak{B}(\mathfrak{N},\mathfrak{M}) = \sett{\iota_M \circ \iota_N^*}$.
\end{definition}

The reader will surely notice an ambiguity in the above definition: the expression ``$\mathfrak{N}$ and $\mathfrak{M}$ are disjoint over $\mathfrak{B}$" is symmetric in $\mathfrak{N}$ and $\mathfrak{M}$ but the notation $J_\mathfrak{B}(\mathfrak{N},\mathfrak{M}) = \sett{\iota_M \circ \iota_N^*}$ is not obviously so. However, it is true that  $J_\mathfrak{B}(\mathfrak{N},\mathfrak{M}) = \sett{\iota_M \circ \iota_N^*}$ if and only if 
$J_\mathfrak{B}(\mathfrak{M},\mathfrak{N}) = \sett{\iota_N \circ \iota_M^*}$, an observation which is contained in the following basic fact.

\begin{proposition}\label{Adjointissusbsytemjoin}
Let $\mathfrak{B}$ be a common subsystem of two systems $\mathfrak{N}$ and $\mathfrak{M}$. Then $\Phi\in J_{\mathfrak{B}}(\mathfrak{N},\mathfrak{M})$ if and only if  $\Phi^*\in J_{\mathfrak{B}}(\mathfrak{M},\mathfrak{N})$. 
\end{proposition}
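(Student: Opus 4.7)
The plan is to reduce the statement to two checks: (a) $\Phi^*$ inherits from $\Phi$ the structure of a $G$--equivariant Markov map, i.e. $\Phi^* \in J_m(\mathfrak{M},\mathfrak{N})$, and (b) the restriction of $\Phi^*$ to $\iota_M(B)$ coincides with $\iota_N\circ\iota_M^{-1}$. The reverse implication will then follow formally by invoking the involutivity $(\Phi^*)^*=\Phi$ of the Accardi--Cecchini adjoint (a short KMS-computation using the modular equivariance of both $\Phi$ and $\Phi^*$) and rerunning the same argument with the roles of $\mathfrak{N}$ and $\mathfrak{M}$ interchanged.

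For (a), the hypotheses $\varphi\circ\Phi=\rho$ and $\sigma_t^\varphi\circ\Phi=\Phi\circ\sigma_t^\rho$ already guarantee, via \eqref{Eq: AccardiCecciniAdjoint}, that $\Phi^*$ exists as a normal $(\varphi,\rho)$--Markov map. To verify $G$--equivariance, I would pair $\Phi^*$ against an arbitrary $x\in N$ and exploit $\rho\circ\alpha_g=\rho$, $\varphi\circ\beta_g=\varphi$, together with $\Phi\circ\alpha_g=\beta_g\circ\Phi$, to obtain
\[
\rho(\alpha_g(\Phi^*(y))\,x) = \rho(\Phi^*(y)\alpha_{g^{-1}}(x)) = \varphi(y\Phi(\alpha_{g^{-1}}(x))) = \varphi(\beta_g(y)\Phi(x)) = \rho(\Phi^*(\beta_g(y))\,x).
\]
Faithfulness of $\rho$ then yields $\alpha_g\circ\Phi^* = \Phi^*\circ\beta_g$ for every $g\in G$, so $\Phi^*\in J_m(\mathfrak{M},\mathfrak{N})$.

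For (b), the key input is Choi's multiplicative-domain theorem: since the restriction $\Phi|_{\iota_N(B)}$ is the $\ast$-isomorphism $\iota_M\circ\iota_N^{-1}$, $\iota_N(B)$ lies in the multiplicative domain of $\Phi$, so $\Phi(\iota_N(b)\,x) = \iota_M(b)\Phi(x)$ for every $b\in B$ and $x\in N$. Combining this with \eqref{Eq: AccardiCecciniAdjoint} and $\varphi\circ\Phi=\rho$ gives the chain
\[
\rho(\Phi^*(\iota_M(b))\,x) = \varphi(\iota_M(b)\Phi(x)) = \varphi(\Phi(\iota_N(b)\,x)) = \rho(\iota_N(b)\,x),
\]
valid for every $x\in N$. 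Setting $x = (\Phi^*(\iota_M(b)) - \iota_N(b))^*$ and using faithfulness of $\rho$ produces $\Phi^*(\iota_M(b)) = \iota_N(b)$, which is exactly $\Phi^*|_{\iota_M(B)} = \iota_N\circ\iota_M^{-1}$.

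The main delicate points are bookkeeping rather than deep obstacles: one must use the \emph{left} side of the multiplicative-domain identity in (b) so that the product $\iota_N(b)\,x$ sits inside $\Phi$ in the order dictated by \eqref{Eq: AccardiCecciniAdjoint}, and one must justify $(\Phi^*)^*=\Phi$ before claiming the reverse implication --- a verification that amounts to applying the KMS property to $\rho(x\Phi^*(y))$ twice, using that $\Phi^*$ also intertwines the modular automorphism groups.
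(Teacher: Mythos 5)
Your proof is correct, but it takes a genuinely different route from the paper's. The paper reduces the statement to the state-level criterion of Proposition \ref{States vs UCP}: it computes $\omega_{\Phi^*}\big(\iota_M(b_1)\otimes\iota_N^{op}(b_2^{op})\big)$ for entire analytic $b_1,b_2\in B_\mu$ through a long chain of modular identities (the KMS relation, $J_\varphi$, $\Delta_\varphi^{1/2}$, and the modular symmetry lemma of \cite{BCM2}), shows it equals $\psi^{op}(b_1\otimes b_2^{op})$, and finishes with a normality/density argument. You instead work directly with the Markov maps: Choi's theorem places $\iota_N(B)$ in the multiplicative domain of $\Phi$, giving $\Phi(\iota_N(b)x)=\iota_M(b)\Phi(x)$, and then the chain $\rho(\Phi^*(\iota_M(b))x)=\varphi(\iota_M(b)\Phi(x))=\varphi(\Phi(\iota_N(b)x))=\rho(\iota_N(b)x)$ together with faithfulness of $\rho$ (applied to $x=a^*$ with $a=\Phi^*(\iota_M(b))-\iota_N(b)$) yields $\Phi^*(\iota_M(b))=\iota_N(b)$ outright --- no analytic elements, no density argument, and no modular theory beyond the existence of the Accardi--Cecchini adjoint. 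Your version is shorter and more elementary; the paper's version has the side benefit of exhibiting the adjoint state $\omega_{\Phi^*}$ explicitly as $\psi^{op}$, keeping the result inside the correspondence/state picture it uses elsewhere. The two loose ends you flag are real but routine: modular equivariance of $\Phi^*$ is part of the Accardi--Cecchini package \cite{AC} (so $\Phi^*\in J_m(\mathfrak{M},\mathfrak{N})$ once your $G$-equivariance computation is done), and the involutivity $(\Phi^*)^*=\Phi$ needed for the reverse implication follows at once from $T_{\Phi^*}=T_\Phi^*$, which the paper already invokes elsewhere.
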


\begin{proof}
Let $\omega_\Phi\in J_{s}(\mathfrak{N},\mathfrak{M})$ be the canonical state associated to the ucp map $\Phi$  and let $\psi:B\odot B^{op}\rightarrow\Cplx$ be the state defined by 
\begin{align*}
\psi(b_1\otimes b_2^{op})=\langle b_1\Omega_\mu b_2,\Omega_\mu\rangle_\mu, \text{ }b_1,b_2\in B. 
\end{align*}
Consider the opposite state of $\psi$, defined by $\psi^{op}:B\odot B^{op}\rightarrow \Cplx$ by 
\begin{align*}
\psi^{op}(b_1\otimes b_2^{op}):=\langle b_2\Omega_\mu b_1,\Omega_\mu\rangle_\mu =\psi(b_2\otimes b_1^{op}), \text{ }b_1,b_2\in B.
\end{align*}
Note that $\Phi^{*}\in J_{m}(\mathfrak{M},\mathfrak{N})$, and consider the associated state $\omega_{\Phi^{*}}\in J_{s}(\mathfrak{M},\mathfrak{N})$. By Proposition \ref{States vs UCP}, we have to show that $\omega_{\Phi^*}(\iota_M(b_1)\otimes \iota_N^{op}(b_2^{op}))=\psi^{op}(b_1\otimes b_2^{op})$ for all $b_1,b_2\in B$. 

Denote by $B_\mu \subseteq B, N_\rho \subseteq N, M_\varphi \subseteq M$ the $*$-algebras of analytic elements for the respective modular automorphism groups $(\sigma_t^\mu),(\sigma_t^\rho)$ and $(\sigma_t^\varphi)$. It follows from \cite[Lemma 2.2]{BCM2} that if  $b \in B_\mu$, then $\iota_N(b)\in N_\rho$ and $\iota_M(b) \in M_\varphi$. Let $b_1,b_2\in B_\mu$. We then have
\begin{align*}
\omega_{\Phi^*}\Big(\iota_M(b_1)\otimes \iota_N^{op}(b_2^{op})\Big)&=\langle\Phi^*(\iota_M(b_1))\Omega_\rho \iota_N(b_2),\Omega_\rho\rangle_\rho \\
&= \langle\Phi^*(\iota_M(b_1))J_\rho (\iota_N(b_2))^*J_\rho \Omega_\rho ,\Omega_\rho\rangle_\rho\\
&= \langle\Phi^*(\iota_M(b_1))\sigma_{-i/2}^\rho(\iota_N(b_2)) \Omega_\rho ,\Omega_\rho\rangle_\rho\\
&=\rho\Big(\Phi^*(\iota_M(b_1))\sigma_{-i/2}^\rho(\iota_N(b_2))\Big)\\
&=\varphi\Big(\iota_M(b_1)\Phi\big(\sigma_{-i/2}^\rho(\iota_N(b_2))\big)\Big)\text{ \indent(by Eq. \eqref{Eq: AccardiCecciniAdjoint})}\\
&=\varphi\Big(\iota_M(b_1)\sigma_{-i/2}^\varphi\big(\Phi(\iota_N(b_2)\big)\Big) \text{ \indent \cite[Lemma 2.2]{BCM2}} \\
&=\langle\Delta_\varphi^{\frac{1}{2}}\Phi(\iota_N(b_2))\Omega_\varphi, (\iota_M(b_1))^*\Omega_\varphi\big)\rangle_\varphi\\
&=\langle\Phi(\iota_N(b_2))\Omega_\varphi, \Delta_\varphi^{\frac{1}{2}}(\iota_M(b_1))^*\Omega_\varphi\big)\rangle_\varphi\\
&=\langle\Phi(\iota_N(b_2))\Omega_\varphi, \sigma_{-i/2}^\varphi((\iota_M(b_1))^*)\Omega_\varphi\big)\rangle_\varphi\\
&=\langle\Phi(\iota_N(b_2))\Omega_\varphi, J_\varphi(\iota_M(b_1))J_\varphi\Omega_\varphi\rangle_\varphi \text{ \indent(\cite{Fa})}\\
&=\langle\Phi(\iota_N(b_2))J_\varphi(\iota_M(b_1))^*J_\varphi\Omega_\varphi, \Omega_\varphi\rangle_\varphi\\
&=\langle\Phi(\iota_N(b_2))\Omega_\varphi\iota_M(b_1), \Omega_\varphi\rangle_\varphi\\
&=\psi(b_2\otimes b_1^{op}) \text{ \indent(by Proposition \ref{States vs UCP})}\\
&=\psi^{op}(b_1\otimes b_2^{op}).
\end{align*}
Since $\omega_{\Phi^{*}},\psi,\psi^{op}$ are normal in each variable, the requirement of Proposition \ref{States vs UCP} is established  by invoking density of analytic elements.  \end{proof}

\section{Relative independence and orthogonality}
\label{section:relative independence and orthogonality}
In this section we establish some basic technical results on joinings relative to a common subsystem that will be useful in the sequel.  Among these is a characterization in Theorem \ref{Partial Isometry} of the relatively independent joining in terms of a `projection property' of equivariant Markov maps.  In the proof, we will make use of the notion of the multiplicative domain of a unital completely positive map; we recall the relevant details on multiplicative domains here.  

\begin{definition} Let $A$ and $B$ be unital C$^*$-algebras and $\Phi: A \rightarrow B$  a unital, completely positive map.  The multiplicative domain of $\Phi$ is the set
\[ A_\Phi = \sett{a \in A: \Phi(a^*a) = \Phi(a)^*\Phi(a),\,\Phi(aa^*)=\Phi(a)\Phi(a)^*} \subseteq A.\]
\end{definition}

\smallskip

The following result of Choi gives a further characterization of the multiplicative domain.  

\begin{lemma}\label{Bimodules Homomorphisms Choi} \cite{Ch} If $A$ and $B$ are unital C$^*$-algebras and $\Phi:A \rightarrow B$ is a ucp map, then $A_\Phi$ is a C$^*$-subalgebra of $A$, the restriction of $\Phi$ to $A_\Phi$ is a $\ast$-homomorphism, and furthermore,
\[A_\Phi = \sett{a \in A: \Phi(ax)=\Phi(a)\Phi(x),\,\Phi(xa) = \Phi(x)\Phi(a) \text{ for all } x \in A}.\]
\end{lemma}

Observe that, using the defining property of the adjoint, it is straightforward to check that the map $\iota_N \circ \iota_N^*: N \rightarrow \iota_N(B)$ is an $\iota_N(B)$-bimodule map, and preserves the state $\rho$.  By uniqueness of the conditional expectation, it follows that $\iota_N \circ \iota_N^* = \mathbb{E}_{\iota_N(B)}$, i.e. $\iota_N^* = \iota_N^{-1}\circ \mathbb{E}_{\iota_N(B)}$.  The following result characterizes precisely when a joining between systems $\mathfrak{N}$ and $\mathfrak{M}$ gives rise to a common subsystem of $\mathfrak{N}$ and $\mathfrak{M}$ and yields a new proof of the ``orthogonality" characterization of the relatively independent joining mentioned above.

\begin{theorem} \label{Partial Isometry} Let $\mathfrak{N} = (N, \rho, \alpha,G)$ and $\mathfrak{M} = (M, \varphi, \beta, G)$ be systems, and let $\Phi \in J_m(\mathfrak{N},\mathfrak{M})$. If $(\mathfrak{B}, \iota_N, \iota_M)$ is a common subsystem of $\mathfrak{N}$ and $\mathfrak{N}$ then the relatively independent joining $\Phi = \iota_M \circ \iota_M^*$ satisfies $\Phi = \Phi \Phi^* \Phi$.  On the other hand, if $\mathfrak{N}$ and $\mathfrak{M}$ are systems and $\Phi \in J_m(\mathfrak{N}, \mathfrak{M})$ satisfies $\Phi = \Phi \Phi^* \Phi$, then $\mathfrak{N}$ and $\mathfrak{M}$ have a common subsystem $\mathfrak{B}$, and $\Phi$ is the relatively independent joining of $\mathfrak{N}$ and $\mathfrak{M}$ over $\mathfrak{B}$.  

\end{theorem}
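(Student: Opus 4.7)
The plan is to treat the two directions separately, exploiting the symmetry between a ucp joining and its adjoint afforded by Proposition~\ref{Adjointissusbsytemjoin}.

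For the forward implication, assume $\Phi = \iota_M \circ \iota_N^{*}$ is the relatively independent joining over the common subsystem $(\mathfrak{B}, \iota_N, \iota_M)$. Since the adjoint reverses composition and $(\iota_N^{*})^{*} = \iota_N$, one has $\Phi^{*} = \iota_N \circ \iota_M^{*}$. The observation $\iota_N \circ \iota_N^{*} = \mathbb{E}_{\iota_N(B)}$ recalled just before the theorem is equivalent to $\iota_N^{*} \circ \iota_N = \mathrm{id}_B$, and similarly $\iota_M^{*} \circ \iota_M = \mathrm{id}_B$; a direct calculation then yields $\Phi\Phi^{*}\Phi = \iota_M \circ (\iota_N^{*} \iota_N) \circ (\iota_M^{*} \iota_M) \circ \iota_N^{*} = \iota_M \circ \iota_N^{*} = \Phi$.

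For the converse, suppose $\Phi \in J_m(\mathfrak{N},\mathfrak{M})$ satisfies $\Phi = \Phi\Phi^{*}\Phi$. Set $E := \Phi^{*}\Phi \colon N \to N$ and $F := \Phi\Phi^{*} \colon M \to M$. Both are normal ucp; they are idempotent (e.g.\ $E^{2} = \Phi^{*}\Phi\Phi^{*}\Phi = \Phi^{*}\Phi = E$), preserve the respective faithful states (since $\rho\circ\Phi^{*} = \varphi$ and $\varphi\circ\Phi = \rho$), commute with the modular automorphism groups (since $\Phi$ and $\Phi^{*}$ both do), and intertwine the $G$-actions. The key technical step is to show that $B_N := E(N)$ is a von~Neumann subalgebra of $N$ and $E$ is the $\rho$-preserving conditional expectation onto it (and analogously for $F$ and $B_M := F(M)$). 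For any $b \in B_N$, $E(b) = b$, so Kadison--Schwarz gives $E(b^{*}b) \geq b^{*}b$; state invariance yields $\rho(E(b^{*}b) - b^{*}b) = 0$, and faithfulness of $\rho$ forces $E(b^{*}b) = b^{*}b$, so $b^{*}b \in B_N$. Polarizing with $b_1 + b_2$ and $b_1 + ib_2$ then produces $b_1^{*}b_2 \in B_N$ for all $b_1, b_2 \in B_N$, and since $B_N$ is $*$-closed (image of the $*$-preserving map $E$) and weakly closed (normality of $E$), it is a von~Neumann subalgebra; Tomiyama's theorem identifies $E$ as the desired conditional expectation.

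From $\Phi = \Phi\Phi^{*}\Phi$ one reads off $F\Phi = \Phi$ and (by taking adjoints) $\Phi^{*}E = \Phi^{*}$, so $\Phi(N) \subseteq B_M$ and $\Phi^{*}(M) \subseteq B_N$; moreover $\Phi|_{B_N} \colon B_N \to B_M$ and $\Phi^{*}|_{B_M} \colon B_M \to B_N$ are mutual inverses, as their compositions equal $E|_{B_N} = \mathrm{id}$ and $F|_{B_M} = \mathrm{id}$. To upgrade $\Phi|_{B_N}$ to a $*$-isomorphism, apply Kadison--Schwarz twice: for $b \in B_N$,
\[
b^{*}b = E(b^{*}b) \;\geq\; \Phi^{*}\bigl(\Phi(b)^{*}\Phi(b)\bigr) \;\geq\; E(b)^{*}E(b) = b^{*}b,
\]
so equality holds throughout; since $\Phi(b^{*}b) - \Phi(b)^{*}\Phi(b) \geq 0$ has vanishing $\Phi^{*}$-image and $\varphi = \rho \circ \Phi^{*}$ is faithful, one gets $\Phi(b^{*}b) = \Phi(b)^{*}\Phi(b)$, placing $b$ in the multiplicative domain of $\Phi$; Lemma~\ref{Bimodules Homomorphisms Choi} then makes $\Phi|_{B_N}$ a $*$-homomorphism. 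Taking $\mathfrak{B}$ to be the subsystem of $\mathfrak{N}$ supported on $B_N$ (with modular- and $G$-invariance inherited from $E$), $\iota_N$ the inclusion, and $\iota_M := \Phi|_{B_N}$, one identifies $\iota_N^{*}$ with $E$ viewed into $B_N$, from which $\iota_M \circ \iota_N^{*}(x) = \Phi(E(x)) = \Phi\Phi^{*}\Phi(x) = \Phi(x)$. The principal obstacle is that the image of a general idempotent ucp map need not be a subalgebra; the Kadison--Schwarz-plus-faithfulness step above is what makes this so here, and the same circle of ideas supplies the $*$-multiplicativity of $\Phi$ on $B_N$ needed to realize $\mathfrak{B}$ as a common subsystem.
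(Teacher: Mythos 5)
Your proof is correct and follows essentially the same route as the paper's: the forward direction by composing the identities $\iota^*\iota = \mathrm{id}_B$, and the converse by showing $\Phi^*\Phi$ and $\Phi\Phi^*$ are the state-preserving conditional expectations onto their fixed-point algebras, then using Kadison--Schwarz together with faithfulness of the state and Choi's multiplicative-domain lemma to make $\Phi$ a $*$-isomorphism between them. The only substantive divergence is that where the paper cites Lemma 6.3 of \cite{BCM} to see that the harmonic elements of $\Phi^*\Phi$ form a von Neumann subalgebra, you prove this directly by the Kadison--Schwarz/polarization argument, which is a valid (and self-contained) substitute; the identity you write as $\Phi^*E=\Phi^*$ should read $E\Phi^*=\Phi^*$ (the adjoint of $\Phi E=\Phi$), but the conclusion $\Phi^*(M)\subseteq B_N$ you draw from it is the correct one.
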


\begin{proof} Suppose first that $\Phi \in J_m(\mathfrak{N},\mathfrak{M})$ satisfies $\Phi = \Phi \Phi^* \Phi$.  By Lemma 6.3 of \cite{BCM}, the algebra of harmonic elements (cf. \cite{Iz})
\begin{align}\label{FixB}
B&=\{x\in N:\Phi^{*}\Phi(x)=x\}\\&=\{x\in N: \Phi^{*}\Phi(xy)=x\Phi^{*}\Phi(y), \text{ } \Phi^{*}\Phi(yx)=\Phi^{*}\Phi(y)x \text{ }\forall y\in N\} \nonumber 
\end{align}
is a von Neumann subalgebra of $N$.  Moreover, $\Phi = \Phi\Phi^*\Phi$ implies that $(\Phi^*\Phi)^{2}=\Phi^*\Phi$ which, in turn, implies $\Phi^*\Phi(N)=B$. So $\Phi^*\Phi:N\rightarrow N$ is a projection of norm one whose image is $B$ and is also a $B$--bimodule map. By a well--known theorem of Tomiyama \cite{To}, $\Phi^*\Phi$ is a conditional expectation onto $B$. But since $\rho\circ \Phi^*\Phi =\rho$, by \cite{Ta} it follows that $\Phi^*\Phi=\mathbb{E}_{B}$, where $\mathbb{E}_{B}$ is the unique normal $\rho $--preserving conditional expectation onto $B$. Uniqueness of $\mathbb{E}_B$ implies that $B$ is $\alpha$-invariant, so this yields a subsystem $\mathfrak{B} = (B, \rho|_B, \alpha|_B, G)$ of $\mathfrak{N}$.  

By our hypothesis, and  properties of the adjoint (see \cite{BCM} for details), the joining $\Phi^* \in J_m(\mathfrak{M}, \mathfrak{N})$ satisfies $\Phi^* = \Phi^* \Phi \Phi^*,$ so the same argument from the previous paragraph shows that $\Phi \Phi^*: M \rightarrow M$ is the unique normal, $\varphi$-preserving conditional expectation onto the $\beta$-invariant von Neumann subalgebra $Q = \Phi \Phi^*(M)$ of $M$. By the Kadison-Schwarz inequality, and the properties of $\Phi,$ for any $x \in B$ we have 
\[ \Phi(x)^*\Phi(x) \leq \Phi(x^*x) = \Phi(\Phi^*\Phi(x)^*\Phi^*\Phi(x)) \leq \Phi(\Phi^*(\Phi(x)^*\Phi(x))) \leq \Phi(x)^*\Phi(x),\]
so that $\Phi(x^*x) = \Phi(x)^* \Phi(x),$ and by Lemma \ref{Bimodules Homomorphisms Choi} the restriction of $\Phi$ to $B$ is a $\ast$-homomorphism.  Moreover, $\Phi(B) = Q$, since 
\[Q = \Phi \Phi^*(M) \supset \Phi \Phi^* \Phi(B) = \Phi(B) = \Phi \Phi^* \Phi(N) = \Phi(N) \supset \Phi \Phi^*(M) = Q.\]
Note that $\Phi|_B$ is isometric, since $\norm{x} = \norm{\Phi^*\Phi(x)} \leq \norm{\Phi(x)} \leq \norm{x}$ for any $x \in B.$  Thus, the restriction of $\Phi$ to $B$ defines an embedding $\iota_M$ of $B$ into $M$ which, by construction, satisfies the necessary equivariance relations for membership in $J_m(\mathfrak{B},\mathfrak{M})$.  Thus, $\mathfrak{B}$ is a common subsystem of $\mathfrak{N}$ and $\mathfrak{M}.$  Finally, if we let $\iota_N$ be the inclusion of $B$ into $N$,  since $\Phi^*\Phi = \mathbb{E}_{\iota_N(B)}$, $ \iota_{M}=\Phi\circ\iota_{N}$ and $\iota_N^* = \iota_N^{-1} \circ \mathbb{E}_{\iota_N(B)}$, we see that $$\Phi = \Phi \Phi^*\Phi =\Phi\circ\mathbb{E}_{\iota_{N}(B)}=\Phi\circ\iota_{N}\circ \iota_{N}^{-1}\circ\mathbb{E}_{\iota_{N}(B)}= \iota_M \circ \iota_N^*,$$ the relatively independent joining of $N$ with $M$ over $B$.    
   
   Conversely, suppose that $\mathfrak{B} = (B, \mu, \gamma, G)$ is a common subsystem of $\mathfrak{N}$ and $\mathfrak{M}$.  Write $\iota_N$ and $\iota_M$ for the associated embeddings, and let $\Phi = \iota_M \circ \iota_N^*$ denote the relatively independent joining.  Then
   \begin{align*} \Phi^*\Phi = \iota_N\circ\iota_M^*\circ\iota_M\circ\iota_N^* &=
  \iota_N \circ \iota_M^{-1} \circ \E_{\iota_M(B)} \circ \iota_M \circ \iota_N^{-1} \circ \E_{\iota_N(B)}\\
  &=\iota_N \circ \iota_N^{-1} \circ \E_{\iota_N(B)}\\
  &=\E_{\iota_N(B)}, \end{align*}
  from which it follows that 
  \[\Phi\Phi^*\Phi = \iota_M \circ \iota_N^{-1} \circ \E_{\iota_N(B)} = \Phi.\]
  This completes the proof.  \end{proof}

\begin{comment}
Note that $xT_{\Phi^*\Phi} = T_{\Phi^*\Phi}x$ for all $x\in B$, and hence $B\subseteq \{T_{\Phi^*\Phi}\}^{\prime}\cap N$. Conversely, $\{T_{\Phi^*\Phi}\}^{\prime}\cap N\subseteq B$ follows from Eq. \eqref{FixB}. 

By the uniqueness of $\mathbb{E}_{B}$ it follows that $\mathbb{E}_{B}$ commutes 
with $\sigma_{\rho}^{t}$ for all $t\in\mathbb{R}$. Finally, note that $\mathbb{E}_{B}=\mathbb{E}_{B}^{*}$. Indeed, by Eq. \eqref{Eq: Accardi Ceccini Adjoint} for all $x,y\in N$ we have
\begin{align*}
\langle x\Omega_{\rho},\mathbb{E}_{B}^{*}(y^{*})\Omega_{\rho} \rangle_{\rho}&=\rho(\mathbb{E}_{B}^{*}(y)x)=\rho(y\mathbb{E}_{B}(x))\\&=\rho(\mathbb{E}_{B}(y\mathbb{E}_{B}(x)))=\rho(\mathbb{E}_{B}(y)\mathbb{E}_{B}(x))\\&=\rho(\mathbb{E}_{B}(\mathbb{E}_{B}(y)x))=\rho(\mathbb{E}_{B}(y)x)=\langle x\Omega_{\rho},\mathbb{E}_{B}(y^{*})\Omega_{\rho} \rangle_{\rho}
\end{align*}
\end{comment}

As a corollary, we use our description of the projection $P_\Phi$ associated to a joining $\Phi$ obtained in the previous section to give the following Hilbert space formulation of disjointness relative to a subsystem.

\begin{theorem} \label{relative independence characterization} Let $\mathfrak{N}$ and $\mathfrak{M}$ be systems, and $\mathfrak{B}$ a common substem of $\mathfrak{M}$ and $\mathfrak{N}$, as above.  Let $\Phi \in J_\mathfrak{B}(\mathfrak{N},\mathfrak{M}),$ and let $P_\Phi: \overline{N \xi_\Phi} \rightarrow  \overline{\xi_\Phi M}$ be the restriction of the orthogonal projection of $\Hil_\Phi$ onto $\overline{\xi_{\Phi}M}.$  Then the following conditions are equivalent:
\begin{itemize}
\item [(i)] The map $\Phi$ is the relatively independent joining of $\mathfrak{N}$ and $\mathfrak{M}$ over $\mathfrak{B}$.
\item[(ii)] The subspaces $\overline{N \xi_\Phi} \ominus \overline{\iota_N(B)\xi_\Phi}$ and $\overline{\xi_\Phi M} \ominus \overline{\xi_\Phi \iota_M(B)}$ are orthogonal in $\Hil_\Phi$. 
\item[(iii)] $P_\Phi(\overline{N \xi_\Phi}) \subseteq \overline{\xi_\Phi \iota_M(B)}$
\end{itemize}
\end{theorem}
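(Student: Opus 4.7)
The plan is to prove the equivalences $(i) \Leftrightarrow (iii)$ and $(ii) \Leftrightarrow (iii)$, with the main tool being the explicit formula $P_\Phi(x\xi_\Phi) = \xi_\Phi\, \sigma^\varphi_{i/2}(\Phi(x))$ from Theorem \ref{Projection Formula}, valid on the norm-dense subset $N_\rho \xi_\Phi \subseteq \overline{N\xi_\Phi}$ (dense via the left exchange map and the density of $N_\rho\Omega_\rho$ in $L^2(N,\rho)$). Two facts will be used repeatedly: modular symmetry of $\iota_M$ (Lemma \ref{ModularSymmetry}) shows $\sigma_{i/2}^\varphi$ preserves $\iota_M(B_\mu) \subseteq \iota_M(B)$, and since $\mathbb{E}_{\iota_N(B)}$ commutes with the modular group $\sigma^\rho_t$, the adjoint $\iota_N^* = \iota_N^{-1}\circ \mathbb{E}_{\iota_N(B)}$ sends $N_\rho$ into $B_\mu$.

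For $(i) \Rightarrow (iii)$, if $\Phi = \iota_M \circ \iota_N^*$ and $x \in N_\rho$, then $\sigma_{i/2}^\varphi(\Phi(x)) = \iota_M(\sigma_{i/2}^\mu(\iota_N^*(x))) \in \iota_M(B)$, and Theorem \ref{Projection Formula} places $P_\Phi(x\xi_\Phi)$ in $\xi_\Phi \iota_M(B)$; density and continuity extend this to $P_\Phi(\overline{N\xi_\Phi}) \subseteq \overline{\xi_\Phi \iota_M(B)}$. For $(iii) \Rightarrow (i)$, the formula runs backwards: applying the right exchange map $R$ to $P_\Phi(x\xi_\Phi)\in \overline{\xi_\Phi \iota_M(B)}$ yields $\Phi(x)\Omega_\varphi \in \overline{\Omega_\varphi \iota_M(B)} = \overline{\iota_M(B)\Omega_\varphi}$ for $x \in N_\rho$, where the equality uses that modularity of $\iota_M(B)$ makes $L^2(\iota_M(B),\varphi|_{\iota_M(B)})$ a $J_\varphi$-invariant subspace of $L^2(M,\varphi)$. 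Since $\Omega_\varphi$ is separating, this forces $\Phi(x) \in \iota_M(B)$, and normality extends this to $\Phi(N)\subseteq \iota_M(B)$. Because $\iota_N(B)$ lies in the multiplicative domain of $\Phi$ (Lemma \ref{Bimodules Homomorphisms Choi}), the map $\iota_N \circ \iota_M^{-1}\circ \Phi: N \to \iota_N(B)$ is a $\rho$-preserving $\iota_N(B)$-bimodule projection, which by uniqueness of the conditional expectation must equal $\mathbb{E}_{\iota_N(B)}$. Hence $\Phi = \iota_M \circ \iota_N^{-1}\circ \mathbb{E}_{\iota_N(B)} = \iota_M \circ \iota_N^*$.

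For $(iii) \Rightarrow (ii)$, given $\xi \in \overline{N\xi_\Phi} \ominus \overline{\iota_N(B)\xi_\Phi}$ and $\eta \in \overline{\xi_\Phi M}\ominus \overline{\xi_\Phi \iota_M(B)}$, the identity $\langle \xi,\eta\rangle_\Phi = \langle P_\Phi \xi, \eta\rangle_\Phi$ (valid since $\eta \in \overline{\xi_\Phi M}$) together with $P_\Phi\xi \in \overline{\xi_\Phi \iota_M(B)}$ and $\eta \perp \overline{\xi_\Phi \iota_M(B)}$ gives the vanishing inner product. For $(ii) \Rightarrow (iii)$, let $p$ and $q$ denote the orthogonal projections onto $\overline{\iota_N(B)\xi_\Phi}$ and $\overline{\xi_\Phi \iota_M(B)}$; given $\xi \in \overline{N\xi_\Phi}$, decompose as $p\xi + (1-p)\xi$. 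Condition (ii) forces $\langle P_\Phi((1-p)\xi), (1-q)\eta\rangle = 0$ for every $\eta \in \overline{\xi_\Phi M}$, so $P_\Phi((1-p)\xi) \in \overline{\xi_\Phi \iota_M(B)}$; and for the $p\xi$ component, approximate in norm by vectors $\iota_N(b_n)\xi_\Phi$ with $b_n \in B_\mu$ and compute $P_\Phi(\iota_N(b_n)\xi_\Phi) = \xi_\Phi \iota_M(\sigma^\mu_{i/2}(b_n)) \in \xi_\Phi \iota_M(B)$ via Theorem \ref{Projection Formula}, passing to the limit by continuity. The most delicate step is $(iii)\Rightarrow(i)$, where one must convert a Hilbert-space condition on $P_\Phi$ into the algebraic statement $\Phi(N)\subseteq \iota_M(B)$; the essential input is the modularity of the common subsystem $\mathfrak{B}$, which yields $J_\varphi$-invariance of $L^2(\iota_M(B)) \subseteq L^2(M,\varphi)$ and hence the coincidence $\overline{\Omega_\varphi \iota_M(B)} = \overline{\iota_M(B)\Omega_\varphi}$ that lets one pass from a right-module closure on the correspondence side back to a left-module closure in $L^2(M,\varphi)$.
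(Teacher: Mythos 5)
Your proof is correct, and it uses the same core machinery as the paper (the formula of Theorem \ref{Projection Formula} on analytic elements, the modular symmetry of Lemma \ref{ModularSymmetry}, the exchange maps, and density arguments), but it is organized differently and, in one place, argues along a genuinely different line. Organizationally, you prove the two biconditionals $(i)\Leftrightarrow(iii)$ and $(ii)\Leftrightarrow(iii)$, whereas the paper runs the cycle $(i)\Rightarrow(ii)\Rightarrow(iii)\Rightarrow(i)$; your implications $(ii)\Rightarrow(iii)$ and $(iii)\Rightarrow(ii)$ coincide in substance with the paper's treatment. The real divergence is in $(iii)\Rightarrow(i)$. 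Both arguments begin identically, pulling the condition $P_\Phi(\overline{N\xi_\Phi})\subseteq\overline{\xi_\Phi\iota_M(B)}$ back through the right exchange map and using modularity of $\iota_M(B)$ (equivalently, $J_\varphi$-invariance of $L^2(\iota_M(B))$, so that $\overline{\Omega_\varphi\iota_M(B)}=\overline{\iota_M(B)\Omega_\varphi}$) together with the separating property of $\Omega_\varphi$ to conclude $\Phi(N)\subseteq\iota_M(B)$. From there the paper shows that $T_\Phi$ restricts to a unitary of $\overline{\iota_N(B)\Omega_\rho}$ onto $\overline{\iota_M(B)\Omega_\varphi}$ with adjoint $T_{\Phi^*}$, deduces the identity $\Phi\Phi^*\Phi=\Phi$, and then invokes Theorem \ref{Partial Isometry}. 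You instead observe that $\iota_N(B)$ lies in the multiplicative domain of $\Phi$ (Lemma \ref{Bimodules Homomorphisms Choi}), so that $\iota_N\circ\iota_M^{-1}\circ\Phi$ is a normal, $\rho$-preserving $\iota_N(B)$-bimodule projection of $N$ onto $\iota_N(B)$, hence equals $\mathbb{E}_{\iota_N(B)}$ by Takesaki's uniqueness theorem, giving $\Phi=\iota_M\circ\iota_N^*$ directly. The paper's route is economical because Theorem \ref{Partial Isometry} is already available; yours is more self-contained, never needs the adjoint $\Phi^*$ or the characterization $\Phi=\Phi\Phi^*\Phi$, and produces the relatively independent joining in closed form in one step. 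Both are sound.
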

\begin{proof} First, note that if $W:\mathcal{H}_{\omega_{\Phi}}\rightarrow \mathcal{H}_{\Phi}$ is the natural pointed bimodule map extending the identification of canonical cyclic vectors, it is an isomorphism by (2) of Theorem 4.6 of \cite{BCM}. Since $W^{*}|_{\overline{N\xi_{\Phi}}}:\overline{N\xi_{\Phi}}\rightarrow \overline{N\xi_{\omega_{\Phi}}}$ is a left Hilbert $N$--module isomorphism and $W|_{\overline{\xi_{\omega_{\Phi}}M}}:\overline{\xi_{\omega_{\Phi}}M}\rightarrow \overline{\xi_{\Phi}M}$ is a right Hilbert $M$--module isomorphism, $WP_{\omega_{\Phi}}W^{*}|_{\overline{N\xi_{\Phi}}}$ is the projection of $\overline{N\xi_{\Phi}}$ onto $\overline{\xi_{\Phi}M}$ and hence $W$  intertwines $P_{\omega_{\Phi}}$ and $P_{\Phi}$, and the immediate analogue of Theorem \ref{Projection Formula} holds for $P_{\Phi}$. 

The proof that $(ii)$ implies $(iii)$ is a consequence of Theorem \ref{Projection Formula} applied to dense sets of analytic vectors together with standard Hilbert space theory: Assuming $(ii)$, given $\eta \in \overline{\xi_\Phi M} \ominus \overline{\xi_\Phi \iota_M(B)}$, one has that $$\langle P_{\Phi}\xi_{0},\eta \rangle=\langle \xi_{0},\eta \rangle=0$$ for any $\xi_{0} \in \overline{N \xi_\Phi} \ominus \overline{\iota_N(B)\xi_\Phi}$. If $\xi\in \overline{N \xi_\Phi}$ decomposes as $\xi_{1}\oplus \xi_{0}$ with $\xi_{1}\in \overline{\iota_{N}(B)\xi_{\Phi}}$ and $\xi_{0} \in \overline{N \xi_\Phi} \ominus \overline{\iota_N(B)\xi_\Phi}$ then for an $\eta$ as above, $$\langle P_{\Phi}\xi,\eta \rangle=\langle P_{\Phi}\xi_{1},\eta \rangle=0$$ since Theorem \ref{Projection Formula} applied to the algebra $\iota_{N}(B_{\mu})$ of analytic elements implies $P_{\Phi}(\overline{\iota_{N}(B)\xi_{\Phi}})=\overline{\xi_{\Phi}\iota_{M}(B)}$. Indeed, $B_{\mu}\Omega_{\mu}$ is a dense subspace of $L^{2}(B,\mu)=\overline {B\Omega_{\mu}}$ and the map $b\Omega_{\mu}\mapsto \iota_{N}(b)\Omega_{\varphi}$ extends to a left Hilbert module isomorphism of $\overline {B\Omega_{\mu}}$ onto $\overline{\iota_{N}(B)\Omega_{\rho}}$ which, in turn, is isomorphic as a left Hilbert module to $\overline{\iota_{N}(B)\xi_{\Phi}}$ via the right exchange map. 

We now prove that $(i)$ implies $(ii)$. If we denote by $\mathbb{E}$ the unique $\rho$--preserving conditional expectation and let
\[ N_\rho \ominus \iota_N(B) = \sett{x \in N_\rho: \mathbb{E}_{\iota_N(B)}(x) = 0},\]
then $(N_\rho \ominus \iota_N(B))\xi_\Phi$ is a subspace of $\overline{N \xi_\Phi} \ominus \overline{\iota_N(B)\xi_\Phi}$, which is dense in the norm on $\Hil_\Phi$ since every $x\in N_{\rho}$ may be written as $(x-\mathbb{E}_{\iota_{N}(B)}(x))+\mathbb{E}_{\iota_{N}(B)}(x)$ and if $N_{\rho}\xi_{\Phi}\ni x_{\lambda}\xi_{\Phi} \rightarrow \eta\in \overline{N\xi_{\Phi}}$, we have that $(x_{\lambda}-\mathbb{E}_{\iota_{N}(B)}(x_\lambda)))\xi_{\Phi}\rightarrow P\eta$ and $\mathbb{E}_{\iota_{N}(B)}(x_\lambda)\rightarrow (1-P)\eta$, where $P$ is the orthogonal projection of $\mathcal{H}_{\Phi}$ onto $\overline{\iota_{N}(B)\xi_{\Phi}}$.  Similarly, the subspace $\xi_\Phi(M_\varphi \ominus \iota_M(B))$ is norm-dense in $\overline{\xi_\Phi M} \ominus \overline{\xi_\Phi \iota_M(B)}$.  Moreover, if $n \in N_\rho \ominus \iota_N(B)$ and $m \in M_\varphi \ominus \iota_M(B)$ are arbitrary, then by Theorem \ref{Projection Formula} we have
\begin{align*} \ip{n \xi_\Phi}{\xi_\Phi m}_{\omega} = \ip{P_{\Phi}(n \xi_\Phi)}{\xi_\Phi m}_{\omega} &=\ip{\xi_\Phi \sigma_{i/2}^\varphi(\Phi(n))}{\xi_\Phi m}_{\omega} \\
&=\ip{\xi_\Phi \Phi(\sigma^\rho_{i/2}(n))}{\xi_\Phi m}_{\omega} = 0,
\end{align*}
since $\Phi(N) \subseteq \iota_M(B).$  It follows that $\overline{N \xi_\Phi} \ominus \overline{\iota_N(B)\xi_\Phi}$ and $\overline{\xi_\Phi M} \ominus \overline{\xi_\Phi \iota_M(B)}$ are orthogonal.  

We now prove that $(iii)$ implies $(i).$ First note that $(iii)$ and Theorem \ref{Projection Formula} imply that for any $n \in N_\rho$ we have
\[ P_{\Phi}(n\xi_\Phi) = \xi_\Phi \sigma_{i/2}^\varphi(\Phi(n)) = \xi_\Phi (\Phi \circ \sigma_{i/2}^\rho)(n) \in \xi_\Phi \iota_M(B),\]
and it follows, since the associated right exchange map is a unitary right module isomorphism, that $\Phi(N) \subseteq \iota_M(B).$ Since $\Phi|_{\iota_{N}(B)}$ is a $*$--isomorphism of $\iota_{N}(B)$ onto $\iota_{M}(B)$, we have that $$T_{\Phi}|_{\overline{\iota_{N}(B)\Omega_{\rho}}}:\overline{\iota_{N}(B)\Omega_{\rho}}\rightarrow \overline{\iota_{M}(B)\Omega_{\varphi}}$$ is a unitary operator and additionally, by Eq. 8 of \cite{BCM}, $$(T_{\Phi}|_{\overline{\iota_{N}(B)\Omega_{\rho}}})^{*}=T_{\Phi}^{*}|_{\overline{\iota_{M}(B)\Omega_{\varphi}}}=T_{\Phi^{*}}|_{\overline{\iota_{M}(B)\Omega_{\varphi}}},$$ so for all $b\in B$,
\[\iota_{M}(b)\Omega_{\varphi}=(T_{\Phi}|_{\overline{\iota_{N}(B)\Omega_{\rho}}})(T_{\Phi}|_{\overline{\iota_{N}(B)\Omega_{\rho}}})^{*}\iota_{M}(b)\Omega_{\varphi}=\Phi\Phi^{*}(\iota_{M}(b))\Omega_{\varphi}\]
This, together with the fact that $\Omega_{\varphi}$ is separating for $M$ and $\Phi(N)\subseteq \iota_{M}(B)$, yields  
\[\Phi \Phi^* \Phi(n) = \Phi(n)\]
for all $n\in N$. By Theorem \ref{Partial Isometry} we get that $\Phi$ is the relatively independent joining. \end{proof}

%\begin{remark}
%[THIS IS A REMARK FOR US AND SHOULD BE DELETED BEFORE SUBMISSION] Note that the above implies that if we have an element $x$ of $J_{\mathcal{B}}(\mathcal{N}, \mathcal{M})$ that is not the relative independent joining, then $x$ prevents the respective orthocomplements from being orthogonal. This seems to contain some important structural information. What is that information? 
%\end{remark}

\section{Relative ergodicity and mixing properties}
\label{section:relative ergodicity and mixing properties}

In this section, we consider relativized versions of the
notions of ergodicity and primeness of W$^*$-dynamical systems studied in \cite{BCM}.  We also employ the technical observations of the previous section in analyzing the relationship between the properties of compactness and weak mixing of a dynamical system. Our main results in the latter part of this section extend those in \cite{Du2} from abelian groups to general groups, and thereby establish noncommutative generalizations of classical characterizations of weak mixing and compactness in terms of joinings over subsystems.

\subsection{Relative ergodicity and primeness}    The concept of a dynamical system which is ergodic relative to a subsystem has been widely studied and appears throughout the literature in both the classical and noncommutative settings.  Prime dynamical systems, i.e., those that admit no nontrivial subsystem, have also appeared widely in the classical setting. For example, note the interesting fact that an ergodic measure preserving $\mathbb{Z}$--system admits a topological, minimal, prime model if and only if it has zero entropy \cite{We}.  Although we were not able to find a specific reference in the literature to the term `relative primeness' defined below, we find it to be a convenient term for this natural generalization of the notion of a prime dynamical system.

\begin{definition}\label{relative_ergodicity}
Let $\mathfrak{B}$ be a subsystem of $\mathfrak{N}$ with respect to the embedding $\iota_N^B$. 

\noindent $(i)$ We say that the system $\mathfrak{N}$  is an identity system relative to $\mathfrak{B}$ if, for any $x \in N$ with $\E_{\iota_N^B(B)}(x)=0$, one has $\alpha_g(x)=x$ for all $g\in G$.

\noindent $(ii)$ The system $\mathfrak{N}$ is said to be ergodic relative $\mathfrak{B}$ if $N^G\subseteq \iota_N^B(B)$. 

\noindent $(iii)$ We say that $\mathfrak{N}$ is prime relative to $\mathfrak{B}$ if, whenever $\mathfrak{A}=(A,\phi,\delta,G)$ is a subsystem of $\mathfrak{N}$ with embedding $\iota_N^A$ such that $\mathfrak{B}$ is a subsystem of $\mathfrak{A}$ with embedding $\iota_A^B$ and 
\[
  \begin{tikzcd}
    B \arrow{r}{\iota^B_A} \arrow[swap]{dr}{\iota^B_N}
 & A \arrow{d}{\iota^A_N} \\
     & N
  \end{tikzcd}
\]
commutes, then $\iota_N^B(B)=\iota_N^A(A)$.
\end{definition}

The following characterization of relative ergodicity of $W^*$-dynamical systems in terms of relative disjointness was recently proved by Duvenhage in \cite{Du2}; we give a new proof of the result using the machinery of correspondences, as a straightforward corollary to Theorem \ref{relative independence characterization}.  

\begin{theorem}
\label{characterize_relative_ergodic}
Let $\mathfrak{B}$ be a subsystem of $\mathfrak{N}$ with respect to the embedding $\iota_N$.  Then $\mathfrak{N}$ is ergodic relative to $\mathfrak{B}$ if and only if for any system $\mathfrak{M}$ which is an identity system relative to $\mathfrak{B}$, with associated inclusion $\iota_M$, one has $J_{\mathfrak{B},m}(\mathfrak{N},\mathfrak{M})=\sett{\iota_{M}\circ\iota_{N}^*}$, i.e., $\mathfrak{N}$ and $\mathfrak{M}$ are disjoint relative to $\mathfrak{B}$. 
\end{theorem}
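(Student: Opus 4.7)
The plan is to deduce this from Theorem \ref{relative independence characterization}, which characterizes the relatively independent joining via an orthogonality condition on two canonical subspaces of $\mathcal{H}_\Phi$.  Throughout I will exploit that the left and right exchange maps are intertwiners of the diagonal $G$-action on $\mathcal{H}_\Phi$ with the Koopman representations on $L^2(N,\rho)$ and $L^2(M,\varphi)$, respectively.

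For the forward implication, I would fix an identity system $\mathfrak{M}$ relative to $\mathfrak{B}$ and an arbitrary $\Phi \in J_{\mathfrak{B},m}(\mathfrak{N},\mathfrak{M})$, and verify condition $(ii)$ of Theorem \ref{relative independence characterization}.  The identity-system hypothesis, combined with the right-exchange identification of $\overline{\xi_\Phi M}\ominus\overline{\xi_\Phi \iota_M(B)}$ with $L^2(M,\varphi)\ominus L^2(\iota_M(B),\varphi|_{\iota_M(B)})$, shows that every vector in $\overline{\xi_\Phi M}\ominus\overline{\xi_\Phi \iota_M(B)}$ is $G$-fixed in $\mathcal{H}_\Phi$.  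Given such an $\eta$ and any $\xi\in\overline{N\xi_\Phi}\ominus\overline{\iota_N(B)\xi_\Phi}$, the orthogonal projection $P_N$ of $\mathcal{H}_\Phi$ onto the $G$-invariant subspace $\overline{N\xi_\Phi}$ commutes with the $G$-action, so $P_N\eta$ is $G$-fixed.  Transported via the left-exchange map, $P_N\eta$ corresponds to an element of $L^2(N,\rho)^G$; the modular invariance of $N^G$ (which follows from $\alpha$ preserving $\rho$) supplies the $\rho$-preserving conditional expectation $\mathbb{E}_{N^G}$, whose $L^2$-extension is precisely the projection onto $\overline{N^G\Omega_\rho}$, so $L^2(N,\rho)^G = \overline{N^G\Omega_\rho}$.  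Relative ergodicity then gives the inclusion $\overline{N^G\Omega_\rho}\subseteq\overline{\iota_N(B)\Omega_\rho}$, whence $P_N\eta\in\overline{\iota_N(B)\xi_\Phi}$ and $\langle\xi,\eta\rangle = \langle\xi,P_N\eta\rangle = 0$.

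For the converse I argue contrapositively.  Assuming $N^G\not\subseteq\iota_N(B)$, I pick a self-adjoint $x\in N^G$ with $\mathbb{E}_{\iota_N(B)}(x)=0$ and $x\ne 0$, and take $M$ to be the von Neumann subalgebra of $N$ generated by $\iota_N(B)$ and $N^G$, equipped with the restricted state $\rho|_M$ and action $\alpha|_M$ and with embedding $\iota_M := \iota_N$.  Once $\mathfrak{M}=(M,\rho|_M,\alpha|_M,G)$ is recognized as an identity system relative to $\mathfrak{B}$, both the relatively independent joining $\iota_M\circ\iota_N^* = \mathbb{E}_{\iota_N(B)}$ and the unique $\rho$-preserving conditional expectation $\mathbb{E}_M: N\to M$ lie in $J_{\mathfrak{B},m}(\mathfrak{N},\mathfrak{M})$, and they separate on $x$ because $\mathbb{E}_M(x)=x\ne 0 = \mathbb{E}_{\iota_N(B)}(x)$, contradicting relative disjointness.

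The main obstacle is precisely this verification that $M = \iota_N(B)\vee N^G$ is an identity system relative to $\mathfrak{B}$, namely that every $y\in M$ with $\mathbb{E}_{\iota_N(B)}(y)=0$ satisfies $\alpha_g(y)=y$.  The delicate point is to analyze mixed products of elements of $\iota_N(B)$ (on which $G$ acts as $\gamma$) with elements of $N^G$ (which are $G$-fixed pointwise) and to track how the $G$-action interacts with the kernel of $\mathbb{E}_{\iota_N(B)}$ inside $N$.  A secondary technical step, needed in the forward direction, is the identification $L^2(N,\rho)^G = \overline{N^G\Omega_\rho}$, which rests on modular theory through $\mathbb{E}_{N^G}$ as above.
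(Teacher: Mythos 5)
Your forward implication is correct, and it takes a genuinely different route from the paper's: the paper shows that the range of any such joining is contained in $\iota_N(B)$ and then invokes condition $(iii)$ of Theorem \ref{relative independence characterization} via the projection formula of Theorem \ref{Projection Formula}, whereas you verify the orthogonality condition $(ii)$ directly by a fixed-vector argument in $\Hil_\Phi$; both work, and yours has the virtue of isolating exactly where relative ergodicity enters (through $L^2(N,\rho)^G\subseteq\overline{\iota_N(B)\Omega_\rho}$). One caveat: the identity $L^2(N,\rho)^G=\overline{N^G\Omega_\rho}$ does not follow merely from the existence of $\mathbb{E}_{N^G}$ --- that gives only the inclusion $\overline{N^G\Omega_\rho}\subseteq L^2(N,\rho)^G$ together with a description of the projection onto the smaller space. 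The reverse inclusion needs a mean-ergodic-type argument: the projection of $x\Omega_\rho$ onto the fixed vectors is the minimal-norm point of $\overline{\mathrm{co}}\,\{\alpha_g(x)\Omega_\rho\}$, and $\sigma$-weak compactness of the norm-bounded ball of $N$ shows that this limit is of the form $y\Omega_\rho$ with $y\in N^G$. This is standard and citable, so it is a presentational rather than a mathematical gap.

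The converse is where the genuine gap lies, and you have located it yourself without closing it. Everything hinges on the claim that $M=W^*(\iota_N(B),N^G)$ is an identity system relative to $\mathfrak{B}$, and as stated this claim fails in general: if $b\in\iota_N(B)$ and $z\in N^G$ with $\mathbb{E}_{\iota_N(B)}(z)=0$, then $\mathbb{E}_{\iota_N(B)}(bz)=b\,\mathbb{E}_{\iota_N(B)}(z)=0$ by bimodularity of the expectation, yet $\alpha_g(bz)=\gamma_g(b)z$, which differs from $bz$ unless $(\gamma_g(b)-b)z=0$. So already degree-one mixed products violate the identity-system condition whenever $G$ acts nontrivially on $B$; concretely, take $B=L^\infty(\mathbb{T})$ with an irrational rotation, $N=B\,\bar\otimes\,L^\infty(\{0,1\})$ with the action trivial on the second tensor factor, $b$ a nontrivial character, and $z=1\otimes h$ with $h$ of mean zero. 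For what it is worth, the paper's own proof makes the same move with the smaller algebra $W^*(\iota_N(B),z)$ and dismisses the verification as easy; it is subject to the identical objection. So the obstacle you flag is not a routine verification left to the reader --- it is the point at which this construction actually breaks, and a complete converse would require either a different choice of the auxiliary system $\mathfrak{M}$ or a more careful treatment of what ``identity system relative to $\mathfrak{B}$'' must mean for the class of counterexample systems to be nonempty.
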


\begin{proof}  First let $\mathfrak{N} = (N, \rho, \alpha, G)$ be a system which is ergodic relative to a subsystem $\mathfrak{B}=(B,\iota_N)$, and suppose that $\mathfrak{M} = (M, \varphi, \beta, G)$ is a system which also contains $\mathfrak{B}$ as a subsystem, and is the identity relative to $\mathfrak{B}$. Let $\Phi: M \rightarrow N$ be an element of $J_{\mathfrak{B},m}(M,N)$.  Then, for any $y \in M \ominus \iota_M(B)$, since $\beta_g(y)=y$ for all $g \in G$ we must also have
\[\alpha_g(\Phi(y)) = \Phi(\beta_g(y)) = \Phi(y), \quad g \in G,\]
and then ergodicity of $\mathfrak{N}$ relative to $\mathfrak{B}$ implies $\Phi(y) \in \iota_N(B)$.  By definition, we also have $\Phi(\iota_M(B)) \subseteq \iota_N(B)$, so this means the image of $\Phi$ is contained in $\iota_N(B)$.  Then, for any analytic element $m \in M$, by Theorem \ref{Projection Formula} one has
\[P_\Phi(m \xi_\Phi) = \xi_\Phi \sigma_{i/2}^\varphi (\Phi(m)) \in \xi_\Phi \iota_N(B),\]
owing to invariance of $\iota_N(B)$ under $\sigma_{i/2}^\varphi$.  A straightforward density argument then shows that  Theorem \ref{relative independence characterization} applies, and we see that $J_{\mathfrak{B},m}(\mathfrak{M},\mathfrak{N}) = \sett{\iota_N \circ \iota_{M}^*}$, i.e., $\mathfrak{M}$ and $\mathfrak{N}$ are disjoint relative to $\mathfrak{B}$.

Conversely, suppose that $\mathfrak{N} = (N, \rho, \alpha, G)$ is not ergodic relative to a subsystem $\mathfrak{B} = (B, \iota_N)$.  Then there is some $z \in N$ for which $\alpha_g(z)=z$ for all $g \in G$ while $\E_{\iota_N(B)}(z) = 0$.  Then the von Neumann subalgebra $P$ of $N$ generated by $\iota_N(B)$ and $z$ strictly contains $\iota_N(B)$, and defines a subsystem $\mathfrak{P}$ of $\mathfrak{N}$ via the inclusion $ \iota: P \hookrightarrow N$. Moreover, it is easy to see that $\mathfrak{P}$ is an identity system relative to $\mathfrak{B}$.  But now the conditional expectation of $N$ onto $P$ defines a joining of $\mathfrak{N}$ and $\mathfrak{P}$ over the common subsystem $\mathfrak{B}$ which is not the relatively independent joining.
\end{proof}
We also obtain the following characterization of relative primeness, in terms of an ergodicity property of joinings.

\begin{theorem} Let $\mathfrak{B}$ be a subsystem of  $\mathfrak{N}$.  Then $\mathfrak{N}$ is prime relative to $\mathfrak{B}$ if and only if for any non-identity map $\Phi \in J_{\mathfrak{B}}(\mathfrak{N},\mathfrak{N}),$  every element $x \in N$ which satisfies $\Phi(x)=x$ is contained in $\iota_N(B)$. 
\end{theorem}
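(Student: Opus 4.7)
The plan is to pass between self-joinings in $J_\mathfrak{B}(\mathfrak{N},\mathfrak{N})$ and intermediate subsystems by identifying the fixed-point set of a Markov self-joining with a subsystem sitting between $\mathfrak{B}$ and $\mathfrak{N}$, so that relative primeness translates directly into the stated fixed-point condition.

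For the forward implication, I would start with a non-identity $\Phi \in J_\mathfrak{B}(\mathfrak{N},\mathfrak{N})$ and consider $A = \{x \in N : \Phi(x)=x\}$. The main step is to verify that $A$ carries the structure of a subsystem of $\mathfrak{N}$ containing $\iota_N(B)$. Invariance of $A$ under $\alpha_g$ is immediate from the equivariance of $\Phi$, and invariance under $\sigma_t^\rho$ is immediate from the $(\rho,\rho)$-Markov condition. The algebra structure comes from Kadison--Schwarz: for $x \in A$, $\Phi(x^*x) - x^*x = \Phi(x^*x) - \Phi(x)^*\Phi(x) \geq 0$, and $\rho(\Phi(x^*x) - x^*x) = 0$ by $\rho$-preservation; faithfulness of $\rho$ then gives $\Phi(x^*x) = x^*x$, placing $x$ in the multiplicative domain of $\Phi$ and making $A$ a $\ast$-subalgebra by Lemma \ref{Bimodules Homomorphisms Choi}, weakly closed by normality of $\Phi$. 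Since $\Phi$ fixes $\iota_N(B)$ pointwise, $\iota_N(B) \subseteq A$. Thus $\mathfrak{A} = (A, \rho|_A, \alpha|_A, G)$ is an intermediate subsystem, and relative primeness forces $A = \iota_N(B)$ or $A = N$; the latter would mean $\Phi = \mathrm{id}_N$, contradicting non-identity, so $A = \iota_N(B)$ as required.

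For the reverse implication, I would argue by contrapositive. If $\mathfrak{N}$ fails to be prime relative to $\mathfrak{B}$, there is a subsystem $\mathfrak{A}$ with $\iota_N(B) \subsetneq \iota_N^A(A) \subsetneq N$. Because $\iota_N^A(A)$ is modular-invariant in $N$, Takesaki's theorem supplies a unique normal $\rho$-preserving conditional expectation $\mathbb{E} : N \to \iota_N^A(A)$, and this uniqueness forces $\mathbb{E}$ to commute with each $\alpha_g$ and with each $\sigma_t^\rho$. Viewed as a map $N \to N$, $\mathbb{E}$ then lies in $J_m(\mathfrak{N},\mathfrak{N})$ and, since $\iota_N(B) \subseteq \iota_N^A(A)$ is pointwise fixed, $\mathbb{E}$ in fact lies in $J_\mathfrak{B}(\mathfrak{N},\mathfrak{N})$. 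It is not the identity since $\iota_N^A(A) \neq N$, yet its fixed-point set equals $\iota_N^A(A)$, which properly contains $\iota_N(B)$, violating the fixed-point condition on non-identity self-joinings.

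The main obstacle is the structural step showing that the fixed-point set of a non-identity $\Phi$ is actually a $\sigma^\rho$- and $\alpha$-invariant von Neumann subalgebra, rather than merely a self-adjoint weakly closed subspace; this relies on the interplay between Kadison--Schwarz, $\rho$-preservation, and faithfulness of $\rho$, in the same spirit as the identification of $\Phi^*\Phi$ with a conditional expectation in the proof of Theorem \ref{Partial Isometry}. Once this is secured, relative primeness and the fixed-point condition are simply two ways of asserting that no subsystem sits strictly between $\mathfrak{B}$ and $\mathfrak{N}$.
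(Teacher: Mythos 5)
Your proposal is correct and follows essentially the same route as the paper: in the forward direction you identify the fixed-point set of a non-identity $\Phi\in J_{\mathfrak{B}}(\mathfrak{N},\mathfrak{N})$ as an $\alpha$- and $\sigma^{\rho}$-invariant intermediate von Neumann subalgebra (the paper cites Theorem 6.3 of \cite{BCM} for this, while you reprove it via Kadison--Schwarz, $\rho$-preservation, faithfulness, and Lemma \ref{Bimodules Homomorphisms Choi}), and in the reverse direction you use the Takesaki conditional expectation onto a strictly intermediate subalgebra as a non-identity self-joining whose fixed points exceed $\iota_N(B)$. The only difference is expository: you spell out the harmonic-algebra step and the $A=N$ edge case that the paper handles by citation and implicit convention.
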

\begin{proof} First assume that $\mathfrak{N}=(N, \rho, \alpha, G)$ is prime relative to the subsystem $\mathfrak{B}=(B, \iota_N^B)$, and suppose $\Phi \in J_\mathfrak{B}(\mathfrak{N},\mathfrak{N})$ is not the identity.  The set of ``harmonic elements" $N^\Phi = \sett{x \in N: \Phi(x)=x}$ is a von Neumann subalgebra of $N$ (see \cite[Theorem 6.3]
{BCM}) containing  $\iota_N^B(B)$, which is invariant under both $\alpha$ and the modular automorphism group $(\sigma_t^\rho)_{t \in \Real}$.  Therefore, the inclusion of $N^\Phi$ into $N$ makes $\mathfrak{A} = (N^\Phi, \rho|_{N^\Phi}, \alpha|_{N^\Phi}, G)$  a subsystem of $\mathfrak{N}$.  Primeness of $\mathfrak{N}$ relative to $\mathfrak{B}$ then implies $N^\Phi \subseteq \iota_N^B(B)$.  

Conversely, suppose that for any nonidentity map $\Phi \in J_\mathfrak{B}(\mathfrak{N}, \mathfrak{N})$, we have $N^\Phi \subseteq \iota_N^B(N)$.  Let $\mathfrak{A} = (A, \iota_N^A)$ be a subsystem of $\mathfrak{N}$ which also contains $\mathfrak{B}$ as a subsystem, in such a way that $\iota_N^B = \iota_N^A \circ \iota_A^B.$  Then there is a conditional expectation $\Phi$ of $N$ onto $\iota_N^A(A)$ which intertwines both $\alpha$ and the modular automorphisms $\sigma_t^\rho, t \in \Real$.  This map $\Phi$ defines an element of $J_\mathfrak{B}(\mathfrak{N}, \mathfrak{N})$ which is not the identity map.  But then our hypothesis implies $\iota_N^A(A) \subseteq N^\Phi \subseteq \iota_N^B(B)$, from which it follows that $\iota_N^A(A) = \iota_N^B(B).$  Thus, $\mathfrak{N}$ is prime relative to the subsystem $\mathfrak{B}$. \end{proof}

\subsection{Compactness and weak mixing}
We begin by recalling the definitions of the two main properties under consideration in what follows.

\begin{definition}
Let $\mathfrak{M} = (M,G, \beta, \varphi)$ be a system. An element $y\in M$ is said to be weakly mixing for $\beta$ if for every finite subset $F$ of $M\ominus \mathbb{C}1$ and every $\epsilon>0$ there exists a $g\in G$ such that
$|\varphi(x^{*}\beta_{g}(y))|<\epsilon$ for all $x\in F$.  The system $\mathfrak{M}$ is called weakly mixing if every $y \in M \ominus \mathbb{C}1$ is weakly mixing for $\beta$.  
\end{definition}

\begin{definition} A system $\mathfrak{M} = (M, \varphi, \alpha, G)$ is said to be compact if, for every $x \in M$, the orbit $\sett{\alpha_g(x)\Omega_\varphi: g \in G}$ is precompact in $L^2(M, \varphi)$. 
\end{definition}

One of the main results obtained \cite{BCM} was that an ergodic W$^*$-dynamical system is weakly mixing precisely when it admits no nontrivial compact subsystem.  More specifically, Lemma 6.12 of \cite{BCM} shows that if $\mathfrak{M} = (M,G,\beta,\varphi)$ is a W$^*$-dynamical system  which is ergodic, but not weakly mixing, then any finite-dimensional subspace $F$ of $M$ which is invariant under $\beta$ yields a compact subsystem of $\mathfrak{M}$ by letting $G$ act on the von Neumann algebra generated by $F$ and the identity $1 \in M$.   It is then natural from a dynamical point of view to consider whether there is a ``maximal" object of this form, and what its properties are.  To this end, we consider the following definition, which is analogous to the classical definition\footnote{In the classical setting the Kronecker system is the sub $\sigma$--algebra spanned by the eigenfunctions of the action. That definition is inappropriate in general, since such eigenfunctions do not always exist.}

\begin{definition}  Let $\mathfrak{M}=(M,G,\beta,\varphi)$ be an ergodic W$^*$-dynamical system.  Define the Kronecker subalgebra to be the von Neumann subalgebra $M_{K,\beta}$ of $M$ generated by the union of all finite-dimensional $\beta$-invariant subspaces of $M$.  When the action is clear from context (which it always will be), we will abbreviate this notation to $M_K$.  
\end{definition}

We first use the main results of \cite{BCM} to show that the Kronecker subalgebra associated to a system $\mathfrak{M}$ defines a compact subsystem of $\mathfrak{M}$.

\begin{theorem} \label{theorem:Kronecker} If $\mathfrak{M}=(M,G,\beta,\varphi)$ is an ergodic system, then the Kronecker subalgebra $M_K$ is injective and tracial, and defines the unique maximal compact subsystem of $\mathfrak{M}$.
\end{theorem}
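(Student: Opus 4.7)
The plan is to establish, in sequence, that $M_K$ gives a subsystem of $\mathfrak{M}$, that this subsystem is compact, that the underlying algebra $M_K$ is tracial and injective, and finally that $\mathfrak{M}_K$ is the unique maximal compact subsystem of $\mathfrak{M}$. The first step is algebraic and modular-theoretic. If $F_1,F_2$ are finite-dimensional $\beta$-invariant subspaces of $M$, then so are $F_1^*$ and the linear span of all products $\{xy : x \in F_1, y \in F_2\}$; hence the union of all finite-dimensional $\beta$-invariant subspaces linearly spans a $*$-subalgebra that is $\beta$-invariant, and $M_K$ is its $\sigma$-weak closure. To promote $M_K$ to a subsystem requires $\sigma^\varphi$-invariance, and here I would invoke the main results of \cite{BCM} controlling the action of $\Delta_\varphi^{it}$ on finite-dimensional $\beta$-invariant subspaces in ergodic systems, which yields that each such subspace is also $\sigma_t^\varphi$-invariant, so that $M_K$ is $\sigma^\varphi$-invariant and $\mathfrak{M}_K = (M_K, \varphi|_{M_K}, \beta|_{M_K}, G)$ is a subsystem.

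Step two is compactness. Having established that $L^2(M_K,\varphi) = \overline{M_K\Omega_\varphi}$ is the closed linear span of finite-dimensional $\beta$-invariant subspaces $F\Omega_\varphi$ of $L^2(M,\varphi)$, the unitary extension of $\beta$ to $L^2(M_K,\varphi)$ splits as a direct sum of finite-dimensional subrepresentations. Every orbit in $L^2(M_K,\varphi)$ is therefore precompact; in particular the orbit $\{\beta_g(x)\Omega_\varphi : g \in G\}$ is precompact for every $x \in M_K$, so $\mathfrak{M}_K$ is compact.

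For step three, I would bootstrap from step two: since every $\beta|_{M_K}$-orbit in $L^2$ is precompact, the closure of $\{\beta_g|_{M_K} : g \in G\}$ in the topology of pointwise $\|\cdot\|_\varphi$-convergence is a compact group $K$ of $\varphi$-preserving automorphisms of $M_K$, extending the $G$-action. Since $\mathfrak{M}$ is ergodic, $M_K \cap M^G = \mathbb{C}$, and this passes to ergodicity of $K$ acting on $M_K$. Then the classical structure theory for ergodic actions of compact groups on von Neumann algebras (H\o egh-Krohn--Landstad--St\o rmer) yields that $\varphi|_{M_K}$ is tracial and that $M_K$ is approximately finite-dimensional, as the $\sigma$-weak closure of the finite-dimensional span of its spectral isotypic components; in particular, $M_K$ is injective.

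For maximality, let $\mathfrak{N}$ be any compact subsystem. Applying the almost-periodic decomposition to the $G$-action on $L^2(N,\varphi|_N)$, and again invoking \cite{BCM} to realize each finite-dimensional $\beta$-invariant $L^2$-subspace as $F\Omega_\varphi$ for a finite-dimensional $\beta$-invariant $F \subseteq N \subseteq M$, one concludes that $N$ is generated by such subspaces of $M$, whence $N \subseteq M_K$. Uniqueness of the maximal compact subsystem then follows at once. The main obstacle I anticipate is step three: the careful passage from the possibly non-compact group $G$ to the induced compact group $K$, the verification that the ergodicity hypothesis transfers correctly, and the invocation of the classical structural theorems for ergodic compact-group actions in this generality together comprise the most technically delicate portion of the argument, and will require careful combination with the modular-theoretic tools from \cite{BCM} and \cite{BCM2}.
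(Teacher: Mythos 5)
Your proposal is correct in outline and shares the two essential ingredients with the paper's proof: the splitting of $L^2(M,\varphi)$ into almost periodic and weakly mixing parts, and the result from \cite{BCM} (Theorem 6.9 there) that finite-dimensional $\beta$-invariant subspaces of an ergodic system live inside the centralizer $M^\varphi$, combined with Peter--Weyl. Where you genuinely diverge is in how traciality is obtained. The paper gets it essentially for free: since the almost periodic part decomposes as $\bigoplus_i S_i\Omega_\varphi$ with each $S_i$ a finite-dimensional invariant subspace of $M^\varphi$, the algebra $M_K$ sits inside the centralizer $M^\varphi$, on which $\varphi$ is already a trace; HLS is invoked only for injectivity. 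You instead pass to the closure $K$ of $\{\beta_g|_{M_K}\}$ and apply the H{\o}egh-Krohn--Landstad--St{\o}rmer theorem for both traciality and injectivity. That route works, but it forces you to verify that $K$ is actually a compact \emph{group of automorphisms} of $M_K$ (that SOT-limits of the unitaries $V_g$ on $\overline{M_K\Omega_\varphi}$ normalize $M_K$ and induce $\varphi$-preserving automorphisms) -- a standard but not free argument that the paper avoids entirely for traciality, and which you correctly identify as the delicate point. A second organizational difference: the paper works with the auxiliary algebra $M_0$ generated by $\{x\in M: x\Omega_\varphi\in\mathcal H_c\}$ and proves $M_0=M_K$, which delivers compactness and maximality in one stroke via $\mathcal H_c=\overline{M_K\Omega_\varphi}$; you treat these separately, which is fine. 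One step you elide that the paper spells out: in your maximality argument, the conclusion $N\Omega_\varphi\subseteq\overline{M_K\Omega_\varphi}\Rightarrow N\subseteq M_K$ is not automatic -- the paper gets it by noting that $N$ and $M_K$ both lie in the finite von Neumann algebra $M^\varphi$, so the orthogonal projection onto $\overline{M_K\Omega_\varphi}$ implements the $\varphi$-preserving conditional expectation $\mathbb E_{M_K}$, whence $\mathbb E_{M_K}(x)\Omega_\varphi=x\Omega_\varphi$ and $x\in M_K$ because $\Omega_\varphi$ is separating. You should include that argument (or an equivalent one) to close the maximality step.
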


\begin{proof}
The unitary representation $V$ induced by the action  $\beta$ on $M$ splits into a direct sum of compact and weakly mixing parts, i.e. $L^{2}(M,\varphi)$ splits into a direct sum of $V$--invariant subspaces $\mathcal{H}_{c}\oplus \mathcal{H}_{wm}$ such that $\mathcal{H}_{c}$ is the maximal closed subspace in which the $V$--orbit of any vector is $\|\cdot\|_{\varphi}$--precompact. The strong-operator closure of the image of $V|_{\mathcal{H}_{c}}$ is then (for instance, by Theorem 3.7 of \cite{Gl}) a compact topological group. 

Let $M_{0}:=vN(\{x\in M:x\Omega_{\varphi}\in\mathcal{H}_{c}\cap M\Omega_{\varphi}\})$. Since it is generated by a $\beta$--invariant subset, the von Neumann algebra $M_{0}$ is $\beta$--invariant. We will employ Theorem 6.9 of \cite{BCM} to show that $M_{0}\subset M^{\varphi}$ and that any element of $M_{0}$ has a $\norm{\cdot}_{\varphi}$--precompact orbit. To begin with, note that by the Peter-Weyl Theorem and Theorem 6.9 of \cite{BCM}, $\mathcal{H}_{c}$ splits as an orthogonal direct sum $\bigoplus_{i=1}^{\infty}S_{i}\Omega_{\varphi}$ with $S_{i}\subseteq M^{\varphi}$ finite-dimensional $\beta$--invariant subspaces of $M^{\varphi}$. Thus $\mathcal{H}_{c}\subseteq L^{2}(M^{\varphi},\varphi)$ and therefore $\{x\in M:x\Omega_{\varphi}\in\mathcal{H}_{c}\cap M\Omega_{\varphi}\}\subseteq M^{\varphi}$ and consequently $M_{0}\subset M^{\varphi}$. It is straightforward to show (cf. the proof of Lemma 6.12 of \cite{BCM}) that any element $y$ in the $*$-algebra $M_{00}$ generated by $\{x\in M:x\Omega_{\varphi}\in\mathcal{H}_{c}\cap M\Omega_{\varphi}\}$ and $1$ will have $\norm{\cdot}_{\varphi}$-precompact orbit. Let $\varepsilon>0$ and $x\in M_{0}$ be given, and $(g_{n})_{n}$ a sequence in $G$. There exists $y\in M_{00}$ such that $\norm{y\Omega_{\varphi}-x\Omega_{\varphi}}_{\varphi}<\varepsilon/3$. Now since $y$ has precompact orbit, there is a subsequence $(g_{n_{k}})$ such that $\beta_{g_{n_{k}}}(y)\Omega_{\varphi}$ is $\norm{\cdot}_{\varphi}$--Cauchy. Choose $N\in \mathbb{N}$ such that $k>l>N$ implies that $\norm{(\beta_{g_{n_{k}}}(y)-\beta_{g_{n_{l}}}(y))\Omega_{\varphi}}_{\varphi}<\varepsilon/3$. Then
\begin{align}
    \norm{(\beta_{g_{n_{k}}}(x)-\beta_{g_{n_{l}}}(x))\Omega_{\varphi}}_{\varphi}&\le \norm{(\beta_{g_{n_{k}}}(x)-\beta_{g_{n_{k}}}(y))\Omega_{\varphi}}_{\varphi}\\\nonumber&+\norm{(\beta_{g_{n_{k}}}(y)-\beta_{g_{n_{l}}}(y))\Omega_{\varphi}}_{\varphi}\\\nonumber
    &+\norm{(\beta_{g_{n_{l}}}(y)-\beta_{g_{n_{l}}}(x))\Omega_{\varphi}}_{\varphi}<\varepsilon,
\end{align}
and therefore $x\Omega_{\varphi}$ has precompact orbit. In fact, we have proved that $M_{0}\Omega_{\varphi}=\mathcal{H}_{c}\cap M\Omega_{\varphi}$.

We claim that $M_{0}=M_{K}$. 

To show $M_{K}\subseteq M_{0}$, note that if $\mathcal{K}$ is a finite-dimensional $\beta$-invariant subspace of $M^\varphi\Omega_\varphi$, then the orbit under $V$ of any $x \Omega_\varphi$ in $\mathcal{K}$ is $\|\cdot\|_{\varphi}$--precompact, and so $x\Omega_{\varphi}\in \mathcal{H}_{c}\cap M\Omega_{\varphi}\subseteq M_0\Omega_{\varphi}$ and therefore $x\in M_{0}$. Since $M_{K}$ is the unital von Neumann algebra generated by all $x$ taken from invariant finite dimensional subspaces $\mathcal{K}$, it follows that $M_{K}\subseteq M_{0}$.

To show $M_{0}\subseteq M_{K}$, assume $x\in M_{0}$. For every $\varepsilon>0$ there exists an $N\in \mathbb{N}$ so that $x\Omega_{\varphi}\in \mathcal{H}_{c}\cap M\Omega_{\varphi}$ is within $\varepsilon$ of $\bigoplus_{i=1}^{N}S_{i}\Omega_{\varphi}$ with each $S_{i}\subseteq M_{K}$ a finite-dimensional invariant subspace, by the above Peter-Weyl decomposition, an internal direct sum decomposition of $\mathcal{H}_{c}$. The fact that this is an internal direct sum is crucial, since the Hilbert space sum is compatible with the sum in the algebra, and hence there are $s_{i}\in S_{i}$, $i\in \{1,\ldots,N\}$ so that $\norm{(x-\sum_{i=1}^{N}s_{i})\Omega_{\varphi}}_{\varphi}<\varepsilon$ with $\sum_{i=1}^{N}s_{i}\in M_{K}$. Therefore  $x\Omega_{\varphi}\in [M_{K}\Omega_{\varphi}]$. Since $M_{K}\subset M^{\varphi}$ and $M_{0}\subset M^{\varphi}$ are inclusions of finite von Neumann algebras with traces (restrictions of) $\varphi$, the orthogonal projection of $[M^{\varphi}\Omega_{\varphi}]$ onto $[M_{K}\Omega_{\varphi}]$ restricts to the $\varphi$--preserving conditional expectation $\mathbb{E}=\mathbb{E}_{M_{K}}^{M^{\varphi}}$, and hence $\mathbb{E}(x)\Omega_{\varphi}=x\Omega_{\varphi}$ and since $\Omega_{\varphi}$ is separating for $M$, $\mathbb{E}(x)=x$ and consequently $x\in M_{K}$.

Finally, the injectivity of $M_K$ is a well-known result \cite{HLS}.\end{proof}

As a consequence of Theorem \ref{theorem:Kronecker}, we can make the following definition.  

\begin{definition}
The Kronecker subsystem $\mathfrak{M}_K$ of an ergodic system $\mathfrak{M}=(M,G,$ $\beta,\varphi)$ is the subsystem of $\mathfrak{M}$ whose underlying von Neumann algebra $M_{K}$ is generated by all finite-dimensional $\beta$-invariant subspaces of $M$.
\end{definition}

\noindent The proof of Theorem \ref{theorem:Kronecker} establishes that, in the above notation $\mathcal{H}_c =\overline{M_K \Omega_{\varphi}} $, that is, the Kronecker subsystem $\mathfrak{M}_K$ of an ergodic system $\mathfrak{M}$ is maximal among compact subsystems of $\mathfrak{M}$.  From this observation we obtain the following corollaries.

\begin{corollary}\label{weakmixingcorollary1}
Let $\mathfrak{M}$ be an ergodic system. For every finite subset $F$ of $M\ominus B_{0}$ then for every $\epsilon>0$ there exists a $g\in G$ with $|\varphi(x^{*}\beta_{g}(y))|<\epsilon$ for all $x,y\in F$.
\end{corollary}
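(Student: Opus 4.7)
Reading $B_0$ as the Kronecker subalgebra $M_K$, the plan is to rephrase the desired inequality as a joint weak-mixing statement for the Koopman representation, and to reduce to the standard joint weak-mixing lemma for unitary representations without finite-dimensional invariant subspaces.

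Let $V_g:L^2(M,\varphi) \to L^2(M,\varphi)$ be the Koopman unitary $z\Omega_\varphi \mapsto \beta_g(z)\Omega_\varphi$. For any $x,y \in M$,
\[\varphi(x^*\beta_g(y)) = \langle V_g(y\Omega_\varphi), x\Omega_\varphi\rangle_\varphi,\]
so the claim becomes a statement about matrix coefficients. The proof of Theorem \ref{theorem:Kronecker} furnishes a $V$-invariant orthogonal decomposition $L^2(M,\varphi) = \mathcal{H}_c \oplus \mathcal{H}_{wm}$ with $\mathcal{H}_c = \overline{M_K\Omega_\varphi}$, and the Peter--Weyl argument used there identifies $\mathcal{H}_c$ with the closed span of all finite-dimensional $V$-invariant subspaces of $L^2(M,\varphi)$. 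Consequently $\mathcal{H}_{wm}$ contains no non-zero finite-dimensional $V$-invariant subspace. For $F = \{x_1,\ldots,x_n\} \subseteq M \ominus M_K$ the vectors $\xi_i := x_i\Omega_\varphi$ lie in $\mathcal{H}_{wm}$.

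The corollary therefore reduces to the following standard fact: if $V$ is a unitary representation of the locally compact group $G$ on a Hilbert space $\mathcal{K}$ with no non-zero finite-dimensional invariant subspace, then for every finite set $\{\xi_1,\ldots,\xi_n\} \subseteq \mathcal{K}$ and every $\epsilon>0$ there exists $g \in G$ with $|\langle V_g \xi_i, \xi_j\rangle| < \epsilon$ for all $i,j$. One proves this by considering the tensor product $V \otimes \overline{V}$ on $\mathcal{K} \otimes \overline{\mathcal{K}}$: a fixed vector of $V \otimes \overline{V}$ corresponds to a Hilbert--Schmidt operator commuting with $V$, whose non-zero spectral projections would supply a finite-dimensional $V$-invariant subspace; hence $V \otimes \overline{V}$ has no non-zero fixed vector, from which a standard polynomial/tensor-power argument (applied to $V^{\otimes k} \otimes \overline{V}^{\otimes k}$) delivers a single $g$ making all matrix coefficients $\langle V_g \xi_i, \xi_j\rangle$ simultaneously small. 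Applied to $V|_{\mathcal{H}_{wm}}$ and the vectors $\xi_i = x_i\Omega_\varphi$, this yields the required $g$.

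The main obstacle is precisely the step from ``no finite-dimensional invariant subspace'' to a single $g$ witnessing the joint smallness of all $n^2$ matrix coefficients; for abelian $G$ this is transparent via the spectral theorem, but for a general locally compact $G$ one must invoke the tensor-power/Peter--Weyl apparatus underlying Theorem \ref{theorem:Kronecker} and \cite{BCM}. Everything else is bookkeeping.
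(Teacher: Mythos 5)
The paper offers no written proof of this corollary, and your reduction is exactly the intended one: reading $B_0$ as $M_K$, Theorem \ref{theorem:Kronecker} gives $\mathcal{H}_c=\overline{M_K\Omega_\varphi}$, so for $x\in F\subseteq M\ominus M_K$ the vectors $x\Omega_\varphi$ lie in $\mathcal{H}_{wm}$, which contains no nonzero finite-dimensional invariant subspace, and the Bergelson--Rosenblatt characterization of weakly mixing representations (valid for arbitrary, not just abelian, groups) supplies the single $g$. One small correction to your sketch of that standard lemma: no higher tensor powers $V^{\otimes k}\otimes\overline{V}^{\otimes k}$ are needed --- once $V\otimes\overline{V}$ is known to have no nonzero invariant vector (via your Hilbert--Schmidt argument), apply the minimal-norm-element/convex-hull argument to the single vector $\zeta=\sum_i \xi_i\otimes\overline{\xi_i}$ and use that $\langle (V_g\otimes\overline{V_g})\zeta,\zeta\rangle=\sum_{i,j}\abs{\langle V_g\xi_i,\xi_j\rangle}^2\ge 0$, so that some $g$ in the approximating convex combination makes all $n^2$ matrix coefficients simultaneously small.
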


\begin{corollary} \label{weakmixingcorollary2}
Let $\mathfrak{M}$ be an ergodic system with Kronecker subsystem $\mathfrak{M}_{K}$. If $y\in M\ominus M_{K}$ then $y$ is weakly mixing for $\beta$.
\end{corollary}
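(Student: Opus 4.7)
My plan is to reduce the statement to Corollary \ref{weakmixingcorollary1} by decomposing each test element along the Kronecker subalgebra, exploiting the orthogonality of $\beta$-invariant subspaces that arises from the Peter-Weyl picture built up in the proof of Theorem \ref{theorem:Kronecker}.

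First I would verify that since $M_K \subseteq M^\varphi$, the modular automorphism group $(\sigma_t^\varphi)$ fixes $M_K$ pointwise, so by Takesaki's theorem there is a unique normal $\varphi$-preserving conditional expectation $\mathbb{E}_{M_K}: M \to M_K$. Theorem \ref{theorem:Kronecker} already shows that $\overline{M_K\Omega_\varphi} = \mathcal{H}_c$, so the orthogonal projection onto this subspace restricts to $\mathbb{E}_{M_K}$ on $M\Omega_\varphi$. In particular, the hypothesis $y \in M\ominus M_K$ is to be interpreted as $\mathbb{E}_{M_K}(y) = 0$, equivalently $y\Omega_\varphi \in \mathcal{H}_{wm}$.

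Next, fixing a finite set $F \subset M\ominus\Cplx 1$ and $\varepsilon > 0$, I would decompose each $x \in F$ as $x = x_K + x_0$, where $x_K := \mathbb{E}_{M_K}(x) \in M_K$ and $x_0 := x - x_K \in M \ominus M_K$. The cross term $\varphi(x_K^* \beta_g(y))$ vanishes identically: since $\mathfrak{M}_K$ is a subsystem of $\mathfrak{M}$, the unitary $V_g$ on $L^2(M,\varphi)$ induced by $\beta$ preserves $\overline{M_K\Omega_\varphi}$ and hence its orthogonal complement, so $\beta_g(y)\Omega_\varphi \in \mathcal{H}_c^\perp$ for every $g\in G$, forcing
\[ \varphi(x_K^*\beta_g(y)) = \langle \beta_g(y)\Omega_\varphi, x_K\Omega_\varphi\rangle_\varphi = 0. \]
Consequently $\varphi(x^*\beta_g(y)) = \varphi(x_0^*\beta_g(y))$ for every such $g$.

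The remaining step is to choose $g\in G$ so that $|\varphi(x_0^*\beta_g(y))| < \varepsilon$ uniformly in $x \in F$. For this I would apply Corollary \ref{weakmixingcorollary1} to the finite subset $F' := \{x_0 : x\in F\}\cup\{y\}$ of $M\ominus M_K$; the corollary's uniform quantifier over pairs drawn from $F'$ directly supplies the required $g$. The only point of any real subtlety is the vanishing of the Kronecker component, which hinges on the $\beta$-invariance of $M_K$ together with the identification $\overline{M_K\Omega_\varphi} = \mathcal{H}_c$; once that is in hand, the rest of the argument is a routine application of the preceding corollary.
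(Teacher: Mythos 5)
Your proposal is correct and follows essentially the same route as the paper: decompose each $x\in F$ as $\mathbb{E}_{M_K}(x)+(x-\mathbb{E}_{M_K}(x))$, observe that the Kronecker component contributes nothing because $\beta_g(y)\Omega_\varphi$ stays orthogonal to $\overline{M_K\Omega_\varphi}$, and apply Corollary \ref{weakmixingcorollary1} to the set of residuals together with $y$. Your version is in fact slightly more careful than the paper's in explicitly adjoining $y$ to the test set and in justifying the vanishing of the cross term via $\beta$-invariance of $\mathcal{H}_c$.
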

\begin{proof}
Let $\epsilon>0$ and $F$ be a finite subset of $M\ominus \mathbb{C}1$, and $F_{0}=\{x-\mathbb{E}_{M_{K}}(x):x\in F\}$. By Corollary \ref{weakmixingcorollary1} we know there exists $g\in G$ so that $|\varphi(x_{0}^{*}\beta_{g}(y))|<\epsilon$ for all $x_{0}\in F_{0}$. Now for any $x\in F$ we have that 
\begin{align*}
|\varphi(x^{*}\beta_{g}(y))|&=|\varphi((x-\mathbb{E}_{M_{K}}(x))^{*}\beta_{g}(y))+\varphi(\mathbb{E}_{M_{K}}(x)^{*}\beta_{g}(y)|\\
&=|\varphi((x-\mathbb{E}_{M_{K}}(x))^{*}\beta_{g}(y))|<\epsilon.
\end{align*} \end{proof}

\noindent The following additional corollary to Theorem \ref{theorem:Kronecker} extends Theorem 5.5 of \cite{Du} from the setting of abelian groups to that of a general locally compact acting group.  

\begin{corollary} \label{weakmixingcorollary3} An ergodic system $\mathfrak{M} = (M,G,\beta,\varphi)$ is weakly mixing if and only if its Kronecker subsystem $\mathfrak{M}_K$ is trivial.
\end{corollary}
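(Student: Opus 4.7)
The corollary is a biconditional, and one direction is essentially free from what has been established. The plan is to treat the easy direction first and then isolate the short combinatorial argument that handles the hard direction.

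For the ``if'' direction, suppose the Kronecker subsystem is trivial, i.e.\ $M_{K}=\mathbb{C}1$. Then $M\ominus M_{K}=M\ominus \mathbb{C}1$, so Corollary \ref{weakmixingcorollary2} directly asserts that every $y\in M\ominus\mathbb{C}1$ is weakly mixing for $\beta$. By definition, $\mathfrak{M}$ is weakly mixing.

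For the ``only if'' direction, I would argue by contrapositive: assuming $M_{K}\neq \mathbb{C}1$, produce an element of $M\ominus \mathbb{C}1$ that fails to be weakly mixing. By the definition of $M_{K}$ as the von Neumann algebra generated by finite-dimensional $\beta$-invariant subspaces, non-triviality of $M_{K}$ forces the existence of a finite-dimensional $\beta$-invariant subspace $F\subseteq M$ not contained in $\mathbb{C}1$. Replacing $F$ by $F+\mathbb{C}1$ (still finite-dimensional and $\beta$-invariant since $\beta_{g}(1)=1$), I may assume $\mathbb{C}1\subseteq F$. The subspace $F_{0}:=\{x\in F:\varphi(x)=0\}$ is $\beta$-invariant because $\varphi$ is $\beta$-invariant, finite-dimensional of dimension $n\geq 1$, and contained in $M\ominus\mathbb{C}1$.

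Choose vectors $e_{1},\dots,e_{n}\in F_{0}$ whose images under the GNS map form an orthonormal basis of $F_{0}\Omega_{\varphi}$, so $\varphi(e_{j}^{*}e_{i})=\delta_{ij}$. Set $y:=e_{1}\in M\ominus\mathbb{C}1$. Because $F_{0}$ is $\beta$-invariant, for every $g\in G$ we have $\beta_{g}(y)=\sum_{j=1}^{n}c_{j}(g)e_{j}$ with $c_{j}(g)=\varphi(e_{j}^{*}\beta_{g}(y))$, and Parseval together with $\beta$-invariance of $\varphi$ gives
\[
\sum_{j=1}^{n}|\varphi(e_{j}^{*}\beta_{g}(y))|^{2}=\|\beta_{g}(y)\|_{\varphi}^{2}=\|y\|_{\varphi}^{2}=1.
\]
Taking $F'=\{e_{1},\dots,e_{n}\}\subseteq M\ominus\mathbb{C}1$ and $\epsilon=\frac{1}{2\sqrt{n}}$, for no $g\in G$ can $|\varphi(e_{j}^{*}\beta_{g}(y))|<\epsilon$ hold simultaneously for all $j$, since that would force the above sum to be strictly less than $1$. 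Hence $y$ is not weakly mixing for $\beta$, so $\mathfrak{M}$ is not weakly mixing.

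No step looks like a serious obstacle; the only mild care needed is the passage $F\rightsquigarrow F_{0}$, to ensure the element witnessing failure of weak mixing lies in $M\ominus\mathbb{C}1$ and that its orbit remains in a finite-dimensional subspace orthogonal to $1$, which is exactly where $\beta$-invariance of $\varphi$ enters.
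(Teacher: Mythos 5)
Your proof is correct, and the ``if'' direction coincides with the paper's (both invoke Corollary \ref{weakmixingcorollary2} once $M\ominus M_K = M\ominus\mathbb{C}1$). For the ``only if'' direction, however, you take a genuinely different and more self-contained route. The paper argues that a nontrivial $M_K$ gives, via Theorem \ref{theorem:Kronecker}, a nontrivial \emph{compact} subsystem of $\mathfrak{M}$, and then cites Theorem 6.13 of \cite{BCM} (weakly mixing ergodic systems admit no nontrivial compact subsystems) to reach a contradiction. You instead work directly from the definition of $M_K$: nontriviality yields a finite-dimensional $\beta$-invariant subspace not contained in $\mathbb{C}1$, you cut down to its trace-zero part $F_0$ (correctly using $\beta$-invariance of $\varphi$ to keep $F_0$ invariant and inside $M\ominus\mathbb{C}1$), and then Parseval's identity
\[
\sum_{j=1}^{n}\bigl|\varphi(e_{j}^{*}\beta_{g}(y))\bigr|^{2}=\varphi\bigl(\beta_g(y)^*\beta_g(y)\bigr)=\varphi(y^*y)=1
\]
forces some coefficient to stay bounded below uniformly in $g$, defeating weak mixing of $y=e_1$ against the finite test set $\{e_1,\dots,e_n\}$. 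What your approach buys is independence from both Theorem \ref{theorem:Kronecker} and the external disjointness result in \cite{BCM}: it only needs the definition of the Kronecker subalgebra and elementary Hilbert space geometry. What the paper's approach buys is brevity and consistency with its broader theme of characterizing dynamical properties through (compact) subsystems and joinings. All the steps in your argument check out, including the reduction $F\rightsquigarrow F_0$ and the orthonormalization inside $F_0\Omega_\varphi$ (legitimate since $\Omega_\varphi$ is separating).
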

\begin{proof}
If $\mathfrak{M}_K$ is the trivial system, then by \ref{weakmixingcorollary2}  every $y \in M \ominus \mathbb{C}1$ is weakly mixing for $\beta$, that is, the system $\mathfrak{M}$ is weakly mixing.  Conversely, if $\mathfrak{M}$ is weakly mixing and $\mathfrak{M}_K$ is not trivial, then the inclusion of the Kronecker subalgebra $M_K$ into $M$ defines a nontrivial compact subsystem of $\mathfrak{M}$, violating Theorem 6.13 of \cite{BCM}.  Thus, $\mathfrak{M}_K$ is trivial.  \end{proof}

The following is  our main result, which extends Theorem 5.6 of \cite{Du2} to actions of nonabelian groups.

\begin{theorem} Let $\mathfrak{N}$ be a compact subsystem of an ergodic system $\mathfrak{M} = (M,G,$ $\beta,\varphi)$.  If $\mathfrak{M}$ and $\mathfrak{M}_K$ are disjoint relative to $\mathfrak{N}$, then the subsystems $\mathfrak{N}$ and $\mathfrak{M}_K$ are isomorphic.   On the other hand, if $\mathfrak{N}$ is isomorphic to $\mathfrak{M}_K$ then for any compact system $\mathfrak{B}$ which has $\mathfrak{M}_K$ as a subsystem, $\mathfrak{M}$ and $\mathfrak{B}$ are disjoint relative to $\mathfrak{M}_K$.  
\end{theorem}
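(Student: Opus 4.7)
The plan is to combine the structure theorem for the Kronecker subsystem (Theorem \ref{theorem:Kronecker}) with the adjoint-symmetry of joinings over a common subsystem (Proposition \ref{Adjointissusbsytemjoin}), reducing both directions to uniqueness statements for $\varphi$-preserving conditional expectations.

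For the forward direction, since $\mathfrak{N}$ is a compact subsystem of $\mathfrak{M}$, the maximality assertion of Theorem \ref{theorem:Kronecker} gives $\iota_\mathfrak{M}(N) \subseteq M_K$, so $\mathfrak{N}$ is a common subsystem of $\mathfrak{M}$ and $\mathfrak{M}_K$. Because $M_K \subseteq M^\varphi$ by the proof of Theorem \ref{theorem:Kronecker}, the modular automorphism group $\sigma_t^\varphi$ fixes $M_K$ pointwise, so the $\varphi$-preserving conditional expectation $\mathbb{E}_{M_K}: M \to M_K$ exists uniquely, is automatically $\sigma_t^\varphi$-equivariant, and is $\beta$-equivariant by $\beta$-invariance of $M_K$ together with its uniqueness. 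Hence $\mathbb{E}_{M_K} \in J_{\mathfrak{N}}(\mathfrak{M},\mathfrak{M}_K)$. The relatively independent joining in this set is the composite $M \xrightarrow{\mathbb{E}_N^M} N \hookrightarrow M_K$. The disjointness hypothesis forces these two joinings to coincide; specializing at $x \in M_K$, we obtain $x = \mathbb{E}_{M_K}(x) \in N$, so $M_K \subseteq N$. Combined with $N \subseteq M_K$, this yields $\mathfrak{N} = \mathfrak{M}_K$ as subsystems of $\mathfrak{M}$, hence $\mathfrak{N} \cong \mathfrak{M}_K$.

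For the reverse direction, fix a compact system $\mathfrak{B}$ containing $\mathfrak{M}_K$ as a subsystem, and let $\Phi \in J_{\mathfrak{M}_K}(\mathfrak{M},\mathfrak{B})$. By Proposition \ref{Adjointissusbsytemjoin}, $\Phi^* \in J_{\mathfrak{M}_K}(\mathfrak{B},\mathfrak{M})$. The key claim is that $\Phi^*(B) \subseteq M_K$: compactness of $\mathfrak{B}$ makes $\{\beta_g(b)\Omega_\varphi\}_{g \in G}$ precompact in $L^2(B,\varphi|_B)$, and applying the bounded, $\beta$-equivariant Hilbert space operator $T_{\Phi^*}$ yields a precompact orbit $\{\beta_g(\Phi^*(b))\Omega_\varphi\}_{g \in G}$ in $L^2(M,\varphi)$. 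By the identification $\mathcal{H}_c \cap M\Omega_\varphi = M_K\Omega_\varphi$ established in the proof of Theorem \ref{theorem:Kronecker}, we conclude $\Phi^*(b) \in M_K$. Since $\Phi^*$ is the identity on $M_K$ by the subsystem condition and lands in $M_K$, it is a $\varphi|_B$-preserving contractive projection of $B$ onto $M_K$; by Tomiyama's theorem and uniqueness of the $\varphi|_B$-preserving conditional expectation, $\Phi^* = \mathbb{E}_{M_K}^B$. A direct computation using the defining property of the adjoint and the pull-through identity $\varphi(m x) = \varphi|_{M_K}(m \, \mathbb{E}_{M_K}^M(x))$ for $m \in M_K$ identifies $\mathbb{E}_{M_K}^B$ as the adjoint of the relatively independent joining $M \xrightarrow{\mathbb{E}_{M_K}^M} M_K \hookrightarrow B$. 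Hence $\Phi$ coincides with that joining, and $J_{\mathfrak{M}_K}(\mathfrak{M},\mathfrak{B})$ is a singleton.

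The main obstacle is the claim $\Phi^*(B) \subseteq M_K$: compactness of $\mathfrak{B}$ is formulated at the Hilbert space level (precompact orbits), so one must pass from the ucp map $\Phi^*$ to its implementing operator $T_{\Phi^*}$, exploit boundedness and equivariance to transport precompactness to $L^2(M,\varphi)$, and then return to the algebra level using separability of $\Omega_\varphi$ for $M$. Once this step is in hand, everything else is an essentially formal uniqueness argument for $\varphi$-preserving conditional expectations, parallel in both directions.
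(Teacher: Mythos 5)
Your proof is correct, and its core is the same as the paper's: in both directions everything hinges on showing that the relevant joining has range inside $M_K$ --- in the forward direction by exhibiting $\mathbb{E}_{M_K}$ as an element of $J_{\mathfrak{N}}(\mathfrak{M},\mathfrak{M}_K)$ that cannot be the relatively independent joining unless $N=M_K$ (the paper phrases this contrapositively, you phrase it directly; same content), and in the reverse direction by transporting precompact orbits through the joining and invoking the maximality of $\mathfrak{M}_K$ from Theorem \ref{theorem:Kronecker}. Where you genuinely diverge is the closing step of the reverse direction. The paper, having shown $\Phi(B)\subseteq M_K$, feeds this into Theorem \ref{Projection Formula} to verify condition (iii) of Theorem \ref{relative independence characterization} and concludes from that Hilbert-space characterization. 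You instead pass to the adjoint via Proposition \ref{Adjointissusbsytemjoin}, observe that $\Phi^*$ is an idempotent ucp state-preserving map onto (a copy of) $M_K$, and invoke Tomiyama plus uniqueness of the state-preserving conditional expectation to identify it, then dualize back. Your route is more elementary in that it bypasses the projection/orthogonality machinery of Section \ref{section:relative independence and orthogonality} entirely, at the cost of an extra appeal to the adjoint symmetry and a small computation identifying $\mathbb{E}_{M_K}^B$ with the adjoint of the relatively independent joining; the paper's route keeps the argument uniform with the other applications of Theorem \ref{relative independence characterization}. I would also note that your explicit use of $T_{\Phi^*}$ (boundedness and equivariance) to transport precompactness from $L^2(B)$ to $L^2(M)$, and the identification $\mathcal{H}_c\cap M\Omega_\varphi=M_K\Omega_\varphi$, spells out a point the paper's proof passes over rather quickly.
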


\begin{proof}  If $\mathfrak{N} = (N,\iota_M)$ is a compact subsystem of the ergodic system $\mathfrak{M}$ which is not isomorphic to $\mathfrak{M}_K$ then $N$ embeds in $M$ as a proper von Neumann subalgebra of $M_K$.  Denote the embedding of $\mathfrak{N}$ into $\mathfrak{M}_K$ by $\iota_{M_K}$.  Then, composition with the conditional expectation $E_{M_K}$ of $M$ onto $M_K$  defines a map $E_{M_K} \circ \iota_M^*$ in $J_\mathfrak{N}(\mathfrak{M},\mathfrak{M}_K)$ which is not equal to $\iota_{M_K}$.  This proves the first statement. 

For the second statement, let $\mathfrak{N}$ be a compact subsystem of $\mathfrak{M}$ isomorphic to $\mathfrak{M}_K$.  Then, up to a twist by the isomorphism which intertwines the two systems,  we may assume that $\mathfrak{N}=\mathfrak{M}_K$.  Let  $\mathfrak{B}=(B,G,\alpha,\rho)$ be a compact system as above, and write $\iota_B$ for the embedding of $\mathfrak{M}_K$ into $\mathfrak{B}$.  If $\Phi: B \rightarrow M$ is any joining of the two systems over $\mathfrak{M}_K$, then for any $x \in B$, $\Phi(x) \in M$ has precompact orbit, since $$ \beta_G(\Phi(x)) = \sett{\beta_g(\Phi(x)):g \in G} = \sett{\Phi(\alpha_g(x)):g \in G} = \Phi(\alpha_G(x)).$$  The maximality property of $\mathfrak{M}_K$ then implies $\Phi(x) \in M_K$.  Then, by Theorem \ref{Projection Formula}, for any $x \in B$ we have
$$ P(x \xi_\Phi) = \xi_\Phi \sigma_{i/2}^\phi(\Phi(x)) \in \Omega_\phi M_K,$$
and by Theorem \ref{relative independence characterization} we have that $\Phi$ is the relatively independent joining. \end{proof}

%\textbf{Acknowledgements}:
%JB was supported by a Fulbright Research Grant
%The authors would like to thank David Kerr for suggesting that we investigate the basic connection between bimodules and joinings. Work on this paper was initiated during a visit of KM to Vassar College and Siena College in 2012, partially supported by Vassar's Rogol Distinguished Visitor program.  We thank the Rogol Fund for this support.  JC's research was partially supported by a research travel grant from the Simons Foundation, and by Simons Foundation Collaboration Grant for Mathematicians \#319001. JB thanks Liming Ge and the Academy of Mathematics and Systems Science of the Chinese Academy of Sciences for their hospitality and support.   

%%%%%%%%%%%%%%%%%%%%%%%%%%%%%%%%%%%%%%%%%%%%%%%%%%%%%%%%%%%%%%%%%%%%%%%%%%%%%%%%%%%%%%%%%%%%%%%%%%%%%%%%%%%%%%%%%%%%%%%%%%%%%%%%%

%%%%%%%%%%%%%%%%%%%%%%%%%%%%%%%%%%%%%%%%%%%%%%%%%%%%%%%%%%%%%%%%%%%%%%%%%%%%%%%%%%%%%%%%%%%%%%%%%%%%%%%%%%

\end{document}